	\let\over=\@@over \let\overwithdelims=\@@overwithdelims
	\let\atop=\@@atop \let\atopwithdelims=\@@atopwithdelims
  	\let\above=\@@above \let\abovewithdelims=\@@abovewithdelims
\tikzstyle{int}=[draw, fill=blue!20, minimum size=2em]
\tikzstyle{dot}=[circle, draw, fill=blue!20, minimum size=2em]
\tikzstyle{init} = [pin edge={to-,thin,black}]
	\newcommand{\eqref}[1]{~(\ref{#1})}
	\def\mod{\mathop{\rm mod}}
\newcommand{\norm}[1]{{\left\Vert #1 \right\Vert}}
\def\argmin{\mathop{\rm argmin}}
\def\argmax{\mathop{\rm argmax}}
\def\exp{\mathop{\rm exp}}
\def\bbordermatrix#1{\begingroup \m@th
	\@tempdima 4.75\p@
	\setbox\z@\vbox{%
		\def\cr{\crcr\noalign{\kern2\p@\global\let\cr\endline}}%
		\ialign{$##$\hfil\kern2\p@\kern\@tempdima&\thinspace\hfil$##$\hfil
			&&\quad\hfil$##$\hfil\crcr
			\omit\strut\hfil\crcr\noalign{\kern-\baselineskip}%
			#1\crcr\omit\strut\cr}}%
	\setbox\tw@\vbox{\unvcopy\z@\global\setbox\@ne\lastbox}%
	\setbox\tw@\hbox{\unhbox\@ne\unskip\global\setbox\@ne\lastbox}%
	\setbox\tw@\hbox{$\kern\wd\@ne\kern-\@tempdima\left[\kern-\wd\@ne
		\global\setbox\@ne\vbox{\box\@ne\kern2\p@}%
		\vcenter{\kern-\ht\@ne\unvbox\z@\kern-\baselineskip}\,\right]$}%
	\null\;\vbox{\kern\ht\@ne\box\tw@}\endgroup}
\newcommand{\reals}{\mathbb{R}}
\newcommand{\expect}[1]{\mathbb{E}\left[#1\right]}
\newcommand{\Prob}{\mathbb{P}}
\newcommand{\prob}[1]{\mathbb{P}\left[#1\right]}
\newcommand{\pth}[1]{\left( #1 \right)}
\newcommand{\sth}[1]{\left\{ #1 \right\}}
\newcommand{\iprod}[2]{\left \langle #1, #2 \right\rangle}
\newcommand{\Iprod}[2]{\langle #1, #2 \rangle}
\definecolor{myblue}{rgb}{.8, .8, 1}
\definecolor{mathblue}{rgb}{0.2472, 0.24, 0.6} 
\definecolor{mathred}{rgb}{0.6, 0.24, 0.442893}
\definecolor{mathyellow}{rgb}{0.6, 0.547014, 0.24}
\newcommand{\red}{\color{red}}
\newcommand{\nbr}[1]{{\sf\red[#1]}}
\newcommand{\calE}{{\mathcal{E}}}
\newcommand{\calN}{{\mathcal{N}}}
\newcommand{\calP}{{\mathcal{P}}}
\newcommand{\calS}{{\mathcal{S}}}
\def\unifto{\mathop{{\mskip 3mu plus 2mu minus 1mu%
	\setbox0=\hbox{$\mathchar"3221$}%
	\raise.6ex\copy0\kern-\wd0%
	\lower0.5ex\hbox{$\mathchar"3221$}}\mskip 3mu plus 2mu minus 1mu}}
\def\simleq{{{\mskip 3mu plus 2mu minus 1mu%
	\setbox0=\hbox{$\mathchar"013C$}%
	\raise.2ex\copy0\kern-\wd0%
	\lower0.9ex\hbox{$\mathchar"0218$}}\mskip 3mu plus 2mu minus 1mu}}
\def\simleq{\lesssim}
\def\simgeq{{{\mskip 3mu plus 2mu minus 1mu%
	\setbox0=\hbox{$\mathchar"013E$}%
	\raise.2ex\copy0\kern-\wd0%
	\lower0.9ex\hbox{$\mathchar"0218$}}\mskip 3mu plus 2mu minus 1mu}}
\def\simgeq{\gtrsim}
\newtheorem{theorem}{Theorem}
\newtheorem{lemma}{Lemma} 
\theoremstyle{definition}
\newcommand{\overlap}{\mathsf{overlap}}
\newcommand{\Tr}{\mathsf{Tr}}
\newcommand{\SNR}{\mathsf{SNR}}
\newcommand{\id}{I_n}
\newcommand{\PPiXperp}{\mathcal{P}_{(\Pi X)^\perp}}
\newcommand{\PXperp}{\mathcal{P}_{ X^\perp}}
\newcommand{\PPiX}{\mathcal{P}_{\Pi X}}
\newcommand{\PX}{\mathcal{P}_{X}}
\newcommand{\fc}{\mathfrak{c}} 
\DeclareMathOperator{\E}{\mathbb{E}}
\DeclareMathOperator{\R}{\mathbb{R}}
\DeclareMathOperator{\EE}{\mathcal{E}}
\DeclareMathOperator{\NN}{\mathcal{N}}
\DeclareMathOperator{\PR}{\mathbb{P}}
\DeclareMathOperator{\eps}{\varepsilon}
\newcommand{\mybinom}[3][0.8]{\scalebox{#1}{$\dbinom{#2}{#3}$}}
\newcommand{\opnorm}[1]{\left\| #1 \right\|_{\rm op}}
\newif\ifmapx
\edef\jobnametmp{\expandafter\string\csname ic_apx\endcsname}
\edef\jobnameapx{\expandafter\mkillslash\jobnametmp}
\edef\jobnameexpand{\jobname}
\renewcommand{\hat}{\widehat}
\renewcommand{\tilde}{\widetilde}
\begin{document}
\ifpdf
\DeclareGraphicsExtensions{.pgf,.jpg,.pdf}
\graphicspath{{figures/}{plots/}}
\fi

\title{Sharp Information-Theoretic Thresholds for Shuffled Linear Regression} 

\author{Leon Lufkin, Yihong Wu, Jiaming Xu\thanks{
L.\ Lufkin is with the Department of Statistics and Data Science, Yale University, New Haven, USA, \texttt{leon.lufkin@yale.edu}.
Y.\ Wu is with the Department of Statistics and Data Science, Yale University, New Haven, USA, \texttt{yihong.wu@yale.edu}.
J.\ Xu is with The Fuqua School of Business, Duke University, Durham NC, USA, \texttt{jiaming.xu868@duke.edu}.
}}
	

\maketitle


\begin{abstract}
This paper studies the problem of shuffled linear regression, where the correspondence between predictors and responses in a linear model is obfuscated by a latent permutation. Specifically, we consider the model $y = \Pi_* X \beta_* + w$, where 
$X$ is an $n \times d$ standard Gaussian design matrix, 
$w$ is Gaussian noise with entrywise variance $\sigma^2$, $\Pi_*$ is an unknown $n \times n$ permutation matrix, and $\beta_*$ is the regression coefficient, also unknown. 
Previous work has shown that, in the large $n$-limit, the minimal signal-to-noise ratio ($\SNR$), $\norm{\beta_*}^2/\sigma^2$, 
for recovering  the unknown permutation exactly with high probability is between $n^2$ and $n^C$
for some absolute constant $C$ and the sharp threshold is unknown even for $d=1$. We show that this threshold is precisely $\SNR = n^4$ for exact recovery throughout the sublinear regime $d=o(n)$. As a by-product of our analysis, we also determine the sharp threshold of almost exact recovery to be $\SNR = n^2$, where all but a vanishing fraction of the permutation is reconstructed.
\end{abstract}

\section{Introduction}

Consider the following linear model, where we observe
\begin{equation}
\label{eq:model}
    y = \Pi_* X \beta_* + w,
\end{equation}
Here $X \in \mathbb{R}^{n \times d}$ is the design matrix, 
 $\beta_* \in \mathbb{R}^d$ is the  unknown regression coefficient, $\Pi_*$ is an unknown $n \times n$ permutation matrix that shuffles the rows of $X$, and $w \in \mathbb{R}^n$ is observation noise.
The goal is to recover $\Pi_*$ and $\beta_*$ on the basis of  $X$ and $y$.

If $\Pi_*$ is known, \prettyref{eq:model} is the familiar linear regression. Otherwise, this problem 
is known as shuffled regression \cite{pananjady2017linear,abid2017linear}, unlabeled sensing \cite{unnikrishnan2018unlabeled,zhang2019permutation,zhang2020optimal}, or  linear regression with permuted/mismatched data \cite{slawski2019linear,mazumder2023linear,slawski2020two}, 
as the correspondence between the predictors (the rows $x_i$'s of $X$) and the responses ($y_i$'s) is lost.
As such, it is a much more difficult problem
as one needs to jointly estimate the permutation $\Pi_*$ and the regression coefficients $\beta_*$.
This is a problem of considerable theoretical and practical interest. For applications in areas such as robotics, data integration, and de-anonymization, we refer the readers to \cite[Sec.~1]{unnikrishnan2018unlabeled} and \cite[Sec.~1.1]{zhang2020optimal}.

A line of work has studied 
the minimal signal to noise ratio (SNR) that is required to reconstruct $\Pi_*$. Following \cite{pananjady2017linear,hsu2017linear}, in this paper we consider a random design $X$ with $X_{ij} \overset{\text{iid}}{\sim} \NN(0,1)$ 
and Gaussian noise $w_i \overset{\text{iid}}{\sim} \NN(0,\sigma^2)$, which are independent from each other.
Define
\begin{equation}
    \label{eq:SNR}
\SNR \triangleq \frac{\|\beta_*\|_2^2}{\sigma^2}.
\end{equation}
It is shown in \cite[Theorems 1 and 2]{pananjady2017linear} that 
for exact recovery (namely, $\hat \Pi = \Pi_*$ with probability tending to one), the required $\SNR$ is between $n^2$ and $n^C$ for some absolute constant $C$. Intriguingly, numerical simulation carried out for $d=1$ (see 
\cite[Fig.~2]{pananjady2017linear}) suggests that there is a  sharp threshold $\SNR = n^{C_0}$
for some constant $C_0$ between 3 and 5. 

The major contribution of this work is to resolve this question by showing that the sharp threshold for exact recovery is $\SNR = n^{4}$ for all dimensions satisfying $d=o(n)$.
Along the way, we also resolve the optimal threshold for achieving almost exact reconstruction, namely, 
$ \overlap(\hat{\Pi},\Pi_*) =1 - o(1)$, where 
\[
\overlap(\hat{\Pi},\Pi_*) \triangleq \frac{1}{n} \Tr(\hat \Pi^\top \Pi_*)
\]
 is the fraction of covariants that are correctly unshuffled. In other words, if $\hat \pi$ and $\pi_*$ are  permutations corresponding to $\hat{\Pi}$ and $\Pi_*$, then 
    $ \overlap(\hat{\Pi},\Pi_*) = \frac{1}{n} |\sth{ i \in [n]:\hat{\pi}(i) = \pi(i) }| $.

\section{Maximum likelihood and quadratic assignment}
\label{sec:mle_qap}
A natural idea for the joint estimation of $(\Pi_*,\beta_*)$ 
is the maximum likelihood estimator (MLE) \cite{pananjady2017linear}:
\begin{equation}
    (\hat{\Pi}, \hat{\beta}) = \argmin_{\Pi\in S_n, \beta\in\reals^d} \lVert y - \Pi X \beta \rVert_2^2,
    \label{eq:def_MLE}
\end{equation}
where $S_n$ denotes the set of all $n\times n$ permutation matrices.
Since $\beta_*$ has no structural assumptions such as sparsity, $n \geq d$ is necessary even when there is no noise and $\Pi_*$ is known.
By classical theory on linear regression, for a fixed $\Pi$
 the optimal $\beta$ for \prettyref{eq:def_MLE} is given by 
 \begin{equation}
     \label{eq:beta_Pi}
     \hat\beta_\Pi \triangleq
     (X^\top X)^{-1} X^\top \Pi^\top y
 \end{equation}
 and 
 $\lVert y - \Pi X \hat\beta_\Pi \rVert_2^2 = \|\PPiXperp y\|_2^2$,
 where
 \begin{align}
 \PPiX & \triangleq \Pi \underbrace{X (X^\top X)^{-1} X^\top}_{\triangleq \PX} \Pi^\top \label{eq:PPiX1}\\
  \PPiXperp&\triangleq I_n- \PPiX
  =\Pi \underbrace{(I_n-X (X^\top X)^{-1} X^\top)}_{\triangleq \PXperp} \Pi^\top
     \label{eq:PPiX2}
 \end{align}
are the projection matrices onto the column span of $\Pi X$ and its orthogonal complement respectively. 
 Thus the ML estimator of $\Pi_*$ can be written as\footnote{We note that although $(\hat{\Pi}, \hat{\beta})$ defined in
 \prettyref{eq:def_MLE} is the MLE for 
 $(\Pi_*,\beta_*)$, it is unclear that 
 $\hat{\Pi}$ itself (i.e., \prettyref{eq:MLE1}) is optimal (that is, minimizing the probability of error $\Prob[\hat \Pi\neq\Pi_*]$ when $\Pi_*$ is drawn uniformly at random), due to the presence of the nuisance parameter $\beta_*$.}
 \begin{equation}
    \hat{\Pi} = \argmax_{\Pi\in S_n} \lVert \PPiX y\rVert_2^2. \label{eq:MLE1}
\end{equation}
This optimization problem is in fact a special instance of the 
\textit{quadratic assignment problem} (QAP) \cite{koopmans1957assignment}:
\begin{equation}
\max_{\Pi\in S_n} \Iprod{A}{\Pi^\top B \Pi},
    \label{eq:QAP}
\end{equation}
where $A=yy^\top$ is rank-one and $B=\PX$ is a rank-$d$ projection matrix.
For worst-case instances of $(A,B)$, the QAP 
\prettyref{eq:QAP} is known to be NP-hard \cite{sahni1976p}.
Furthermore, even solving the special case \prettyref{eq:MLE1} is NP-hard provided that $d=\Omega(n)$ \cite[Theorem 4]{pananjady2017linear}.
On the positive side, for constant $d$ 
it is not hard to see that this can be solved in polynomial time.
Indeed, 
as the proof in \prettyref{sec:proofs} shows (see \cite[Sec.~2]{hsu2017linear} for a similar result),
instead of \prettyref{eq:QAP}, one can approximate the original \prettyref{eq:def_MLE} by discretizing and restricting $\beta$ to an appropriate $\delta$-net for $\delta=1/\text{poly}(n)$. Since for fixed $\beta$, \prettyref{eq:def_MLE} becomes a very special case of the  \textit{linear assignment problem} (LAP) $\max_{\Pi} \Iprod{y}{\Pi X\beta}$ which can be solved by sorting the vectors $y$ and $X\beta$, 
the discretized version of \prettyref{eq:def_MLE} can be computed in $n^{O(d)}$-time. In fact, for the special case of $d=1$, this can be made exact \cite[Theorem 4]{pananjady2017linear}.


\section{Main results}
\label{sec:results}

The following theorem on the statistical performance of the estimator \prettyref{eq:MLE1} is the main result of this paper.
\begin{theorem}
    \label{thm:main}
Fix an arbitrary $\epsilon>0$.
Assume that $d=o(n)$.
\begin{enumerate}
    \item[(a)] Exact recovery:
    If
    $\SNR \geq n^{4+\epsilon}$,
    then 
    $\Prob[\hat{\Pi} = \Pi_*]=1-o(1)$ as $n\to\infty$, where $o(1)$ is uniform in $\Pi_*$ and $\beta_*$.
    \item[(b)] Almost exact recovery:
    If 
    $\SNR \geq n^{2+\epsilon}$,
    then   $\Prob[\overlap(\hat{\Pi},\Pi_*)=1-o(1)]=1-o(1)$ as $n\to\infty$, where $o(1)$ is uniform in $\Pi_*$ and $\beta_*$.
\end{enumerate}
\end{theorem}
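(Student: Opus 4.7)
By the exchangeability of the iid Gaussian design and noise, I may assume $\Pi_* = I_n$, so $y = X\beta_* + w$. Writing $Z := X\beta_*$ and using $\PX Z = Z$, the MLE margin between a competitor $\Pi \ne I_n$ and the truth decomposes as
\begin{equation*}
y^\top (\PPiX - \PX) y \;=\; -\mathcal{G}_\Pi \;+\; 2 Z^\top (\PPiX - \PX) w \;+\; w^\top (\PPiX - \PX) w,
\end{equation*}
where the nonnegative signal $\mathcal{G}_\Pi := \|\PPiXperp Z\|^2 = \|\PPiXperp (I_n - \Pi) Z\|^2$ (the second equality uses $\PPiXperp \Pi X\beta_* = 0$). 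Both parts of \prettyref{thm:main} reduce to showing $\mathcal{G}_\Pi$ exceeds the two noise terms uniformly over the relevant $\Pi$.

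\textbf{Signal cleanup and noise control.} Write $S_\Pi := \{i : \pi(i) \ne i\}$ and $k := |S_\Pi|$. A direct calculation gives $\PPiX(I_n - \Pi) Z = \Pi X((X^\top X)^{-1}X^\top \Pi^\top X - I_d)\beta_*$; using $X^\top X \approx n I_d$ and $\E[X^\top \Pi^\top X] = (n-k) I_d$, the projection defect $\|\PPiX(I_n-\Pi)Z\|^2$ is of order $\max(k^2/n,\, d/n) \cdot \|Z\|^2$ plus lower-order fluctuations, leaving $\mathcal{G}_\Pi \asymp \sum_{i \in S_\Pi}(Z_i - Z_{\pi^{-1}(i)})^2$ once $d = o(n)$. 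For the noise, conditioning on $X$ makes the linear term a centered Gaussian of variance $4\sigma^2 \mathcal{G}_\Pi$ (since $(\PPiX - \PX) Z = -\PPiXperp Z$), whose tail is $\exp(-\mathcal{G}_\Pi/(8\sigma^2))$. The quadratic term is centered ($\Tr \PPiX = \Tr \PX = d$) and Hanson--Wright controlled via $\|\PPiX - \PX\|_F^2 \le 2 \min(k, d)$ (using that $\PPiX - \PX$ has rank at most $2k$); it is always of lower order.

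\textbf{Stratified union bound.} Both parts amount to controlling $\min_{|S_\Pi| = k} \mathcal{G}_\Pi$ via Gaussian-spacing estimates on $Z_1, \ldots, Z_n \iiddistr \NN(0, \|\beta_*\|^2)$, stratified by $k$. For part (a), the bottleneck is $k = 2$: a transposition $\Pi = (i\,j)$ gives $\mathcal{G}_\Pi \approx 2(Z_i - Z_j)^2$, so over $\binom{n}{2}$ candidates the union bound demands $\min_{i<j}(Z_i - Z_j)^2 \gtrsim \sigma^2 \log n$. A classical Gaussian spacing estimate yields $\min_{i<j}|Z_i - Z_j| \gtrsim \|\beta_*\|\, n^{-2-o(1)}$ w.h.p., which converts precisely to $\SNR \gg n^{4 + o(1)}$. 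For $k \ge 3$, the minimum $k$-subset range $R_k \asymp \|\beta_*\|\, n^{-k/(k-1)}$ gives $\mathcal{G}_\Pi \gtrsim R_k^2$, a weaker requirement absorbed by the same SNR. For part (b), any $\Pi$ with $\overlap(\hat\Pi, I_n) \le 1 - \delta$ has $k \ge \delta n$; the minimum $k$-derangement cost is of order $k^3 \|\beta_*\|^2 / n^4$ (achieved by pairing the $k/2$ smallest mutually disjoint consecutive-order-statistic gaps), so the Gaussian noise tail dominates the $n^k$ permutation count whenever $\SNR \gg n^2 \log n / \delta^2$, matched by $\SNR \ge n^{2+\eps}$; letting $\delta = \delta_n \to 0$ slowly yields $\overlap = 1 - o(1)$.

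\textbf{Main obstacle.} The technical heart is twofold. First, the signal cleanup requires uniform concentration of the $d \times d$ Gram matrices $(X^\top \Pi^\top X)/n$ over exponentially many $\Pi$, controlling the projection defect $\|\PPiX(I_n - \Pi) Z\|^2$ in a $\Pi$-uniform way; the coupling between $\PPiX$ and $(I_n - \Pi) Z$ via the common $X$ makes this delicate. Second, the sharpness of both thresholds rests on precise Gaussian order-statistic inputs --- the minimum pairwise gap $\asymp \|\beta_*\|\, n^{-2}$ for exact recovery, and the joint law of small consecutive-gap order statistics for almost exact recovery --- which are the only ingredients sensitive to the exponents $4$ and $2$.
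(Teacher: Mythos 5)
Your decomposition of the MLE margin, the identification of transpositions as the bottleneck for exact recovery, the claim that $k\asymp n$-derangements dominate almost-exact recovery, and the resulting exponents $4$ and $2$ all match what the paper ultimately proves. However, the route you propose has a genuine gap at exactly the step you flag as ``the technical heart,'' and the paper in fact does something structurally different to avoid it.

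\textbf{Where the routes diverge.} You plug in the optimal $\hat\beta_\Pi$ (i.e., work with the projector $\PPiX$) for \emph{all} $\Pi$, and then try to show $\mathcal{G}_\Pi = \|\PPiXperp(Z-\Pi Z)\|^2 \asymp \sum_{i\in S_\Pi}(Z_i - Z_{\pi^{-1}(i)})^2$, reducing everything to Gaussian spacing/order-statistic estimates. The paper explicitly avoids this for permutations with many ($\geq \eta n$) non-fixed points: it observes that controlling the MGF of $\|\PPiXperp(X\beta_*)\|^2$ becomes loose when $\Pi$ is far from the identity, and instead takes a union bound over a $\delta$-net of $\beta$, paired with an \emph{exact} cycle-decomposition formula for $\E_X[\exp(-t\|X\beta_* - \Pi X\beta\|^2)]$ (Lemma~\ref{lem:mgf}). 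Only for permutations with few non-fixed points does the paper work with $\PPiXperp(X\beta_*)$ directly, and there it uses a clean distributional identity $\|\PPiXperp(X\beta_*)\| \eqdistr \|\beta_*\|\,\xi\,\|\calP_{(\Pi x_1)^\perp}(x_1)\|$ from \cite{pananjady2017linear} to reduce to $d=1$ and then reapplies the cycle-decomposition MGF for a single column. Both parts rely on the MGF formula rather than on spacing order statistics; the MGF machinery automatically integrates over the small-gap fluctuations, whereas your route would need a careful moderate-deviation argument because the minimum Gaussian spacing has a fat left tail (only $O(t)$ decay below $t n^{-2}$), so conditioning on ``normal'' spacings and then union-bounding is delicate.

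\textbf{The concrete gap.} Your projection-defect estimate ``$\|\PPiX(I_n-\Pi)Z\|^2 = \calO(\max(k^2/n, d/n)\|Z\|^2)$'' is, as stated, far too large to support the conclusion $\mathcal{G}_\Pi \asymp \|Z - \Pi Z\|^2$: for a transposition realizing the minimum spacing, $\|Z-\Pi Z\|^2 \approx \|\beta_*\|^2 n^{-4}$ while your bound gives a defect $\gtrsim \|\beta_*\|^2$, which would make $\mathcal{G}_\Pi$ negative. What you actually need is a \emph{multiplicative} bound of the form $\|\PPiX(Z-\Pi Z)\|^2 \lesssim (d/n)\|Z-\Pi Z\|^2$, uniformly over all $\Pi$. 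This is plausible heuristically --- projecting a $k$-sparse vector onto a $d$-dimensional subspace retains a $d/n$-fraction --- but it is nontrivial to prove because $\PPiX$ and $Z - \Pi Z$ are both functions of $X$ (for a transposition $(i,j)$, one has $\|\PX(e_i - e_j)\|^2 = (x_i-x_j)^\top(X^\top X)^{-1}(x_i-x_j)$ while $Z_i - Z_j = (x_i-x_j)^\top\beta_*$, sharing the factor $x_i - x_j$), and because it must hold simultaneously over exponentially many $\Pi$, including the adversarially chosen minimum-gap pair. The paper's two devices --- the $\beta$-net for far $\Pi$ and the $\xi$-identity for near $\Pi$ --- exist precisely to sidestep this coupling; your sketch names the obstacle but does not provide a mechanism to overcome it.

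Your noise-term analysis (the Gaussian cross term with variance $4\sigma^2\mathcal{G}_\Pi$, and the centered $\chi^2$-difference with Frobenius control) is consistent with the paper's Lemmas~\ref{lmm:noise_projection} and the Chernoff bounds there, and your almost-exact-recovery bookkeeping ($k$-derangement cost $\asymp k^3\|\beta_*\|^2/n^4$ vs.~$n^k$ candidates yielding $\SNR \gg n^2\log n/\delta^2$) is correct in spirit and matches the paper's threshold. The missing piece is the uniform signal control.
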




The positive results in \prettyref{thm:main} are in fact information-theoretically optimal. To see this, for the purpose of lower bound, consider the case where $\Pi_*$ is drawn uniformly at random and $\beta_*$ is a known unit vector. Defining $x \triangleq X\beta_* \sim \calN(0,I_n)$, we have $y = \Pi_* x + w$. 
Then the problem reduces to a special case of the linear assignment model studied in \cite{dai2019database,kunisky2022strong,WWXY22}
where the goal is to reconstruct $\Pi_*$ by  observing $x$ and $y$.\footnote{These works considered the more general setting where $x,y$ are $n\times m$ Gaussian matrices and the respective threshold for exact and almost exact reconstruction is determined to be $n^{-2/m}$ and $n^{-1/m}$ for $m=o(\log n)$.} 
Specifically,  applying 
\cite[Theorem 3]{WWXY22} for one dimension shows that exact (resp.~almost exact) reconstruction is impossible unless 
$\sigma=o(n^{-2})$ (resp.~$\sigma=o(n^{-1})$).

Next, let us comment on the role of the dimension $d$. As lower-dimensional problem instances can be embedded into higher dimensions by padding zeros to $\beta_*$, the minimum required SNR for recovery is non-decreasing in $d$. 
\prettyref{thm:main} shows the optimal thresholds for exact and approximate exact recovery are 
$\SNR=n^{4}$ and $n^{2}$ 
 in the \textit{sublinear regime} of  $d=o(n)$.
When the dimension is proportional to the sample size, say $d=\rho n$ for some constant $\rho \in (0,1)$, we conjecture that the conclusion in \prettyref{thm:main} no longer holds and the sharp threshold depends on $\rho$. In fact, 
\cite[Theorem 1]{pananjady2017linear} shows that the estimator \prettyref{eq:MLE1} achieves exact recovery provided that 
$\SNR\geq n^{C/(1-\rho)}$ for some unspecified constant $C$. On the other hand, the simple lower bound argument above does not yield any dependency on $\rho$, since it assumes $\beta_*$ is known and reduces the problem to $d=1$.
Determining the optimal threshold in the linear regime remains a challenging question.

\section{Further related work}

The model \prettyref{eq:model} has been been considered in the compressed sensing literature for zero observation noise $(\sigma = 0)$, known as the unlabeled sensing problem, with the goal of recovering $\beta_* \in \R^d$ exactly.
The work \cite{unnikrishnan2018unlabeled} showed that when the entries of $X$ are sampled iid from some continuous probability distribution, \emph{any} $\beta_*$, including adversarial instances (the so-called ``for all'' guarantee), can be recovered exactly with probability one if and only if one has $n \geq 2d$ observations.
The paper shows this using a constructive proof, but it requires a combinatorial algorithm involving exhaustive search.

Moving to the weaker ``for any'' guarantee, the works \cite{hsu2017linear,andoni2017correspondence} 
also consider the noiseless setting and
propose an efficient algorithm based on lattice reduction that recovers an arbitrary fixed $\beta_*$  with 
probability one with respect to the random design, provided that $n>d$. 
Another approach based on method of moments is proposed in \cite{abid2017linear}, where the empirical moments of $X \hat{\beta}$ are matched to those of $y$.

There is also a line of work on shuffled regression when the latent permutation is partially (or even mostly) known \cite{slawski2019linear,slawski2020two,zhang2020optimal,mazumder2023linear} that has found applications in analyzing census and climate data.
This approach permits a robust regression formulation for estimating $\beta_*$, wherein the unknown permuted data points are treated as outliers, from which $\Pi_*$ can then be estimated.

The problem of learning from shuffled data has also been considered in nonparametric settings, e.g., isotonic regression, where $y_i = f(x_i) + w_i$, for some $f:[0,1]^d\to\reals$ that is coordinate-wise monotonically increasing, and the goal is to estimate $f$.
When the $x_i$ are permuted, this problem is known as \emph{uncoupled} isotonic regression, which has been studied in \cite{rigollet2019uncoupled} for $d=1$ and in \cite{pananjady2022isotonic} for  $d>1$.

\section{Proof of \prettyref{thm:main}}
\label{sec:proofs}


Throughout the proof, we assume $\Pi_* = I_n$ without loss of generality.
The proof of Theorem \ref{thm:main} follows a union bound over $\Pi \neq I_n$ and is divided  into two parts: 
\prettyref{sec:pf-far} deals with those permutations $\Pi$ whose number of non-fixed points is at least $\eta n$ (for some $\eta=o(1)$ depending on $d$ and $\epsilon$).
\prettyref{sec:pf-near} deals with those permutations $\Pi$ whose number of non-fixed points is at most $\eta n$.

Although both \cite{pananjady2017linear} and the present paper analyze the estimator \prettyref{eq:def_MLE}, the program of our analysis deviates from that in~\cite{pananjady2017linear} in the following two aspects, both of which are crucial for determining the sharp thresholds. 

First, a key quantity appearing in the proof is the following moment generating function (MGF):
\begin{align}
\expect{\exp\left( - t \norm{X\beta_*-\Pi X\beta }^2 \right) }, \label{eq:MGF_expression}
\end{align}
for a given $\Pi$ and $\beta$, where $t \propto\frac{1}{\sigma^2}$. While similar quantities have been analyzed in~\cite{pananjady2017linear}, only a crude bound is obtained in terms of the number of fixed points of $\Pi$ (see Lemma 4 and eq.~(25-26) therein).  Instead, inspired by techniques in~\cite{WWXY22} for random graph matching, we precisely characterize \prettyref{eq:MGF_expression} in terms of the cycle decomposition of $\Pi$ and $\beta$. In particular, to determine the sharp thresholds, it is crucial to consider \textit{all}  cycle types instead of just fixed points. 

Second, recall that the MLE \prettyref{eq:def_MLE} involves a double minimization over $\Pi$ and $\beta$. While it is straightforward to solve the inner minimization over $\beta$ and obtain a closed-form expression for the optimal $\hat{\beta}_{\Pi}$ \prettyref{eq:beta_Pi}, directly analyzing the MLE with this optimal $\hat{\beta}_{\Pi}$ plugged in, namely, the QAP \prettyref{eq:QAP}, turns out to be challenging. In particular, this requires a tight control of the MGF~\prettyref{eq:MGF_expression} with $\beta$ replaced by $\hat{\beta}_{\Pi}$.
While this is doable when $\Pi$ is close to $\id$, the analysis becomes loose when $\Pi$ moves further away from $\id$ and requires suboptimally large $\SNR$. Alternatively, we do not work with this optimal $\hat{\beta}_{\Pi}$ and instead take a union bound over a proper discretization ($\delta$-net) of $\beta$. Importantly, the resolution $\delta$ needs to be carefully chosen so that the cardinality of the $\delta$-net is not overwhelmingly large compared to~\prettyref{eq:MGF_expression}. This part crucially relies on the sublinearity assumption $d=o(n)$ and the fact that $\Pi$ has at least $\eta n$ non-fixed points.

\subsection{Proof for permutations with many errors}
\label{sec:pf-far}

In this part, we focus on the permutations that are far away from the ground truth and prove that
\begin{align}
\prob{  \overlap(\hat{\Pi}, \id) \le \left( 1- \eta \right)n} \le  o(1), \label{eq:MLE_error_big}
\end{align}
for any fixed $\epsilon,$
provided that $\SNR \ge n^{2+\epsilon}$, $d=o(n)$,  $\epsilon \eta n \ge 100d$, and $\eta \ge n^{-\epsilon/10}.$
Note that here we only require
$\SNR \ge n^{2+\epsilon}$ instead of $\SNR \ge n^{4+\epsilon}$. This directly implies the desired sufficient condition for the almost exact recovery and proves Part (b) of~\prettyref{thm:main}, with an appropriate choice of $\eta=o(1).$

Let $\calS(m)$ denote the set of permutation matrices with $m$ fixed points. For a given $r$, let $B_r(\beta_*)\triangleq \{\beta: \norm{\beta-\beta_*}\le r\}$.
The following lemma shows that we can discretize $\beta$ appropriately without inflating the objective too much. 
\begin{lemma} 
\label{lem:discretization}
There exists a $\delta$-net $N_\delta(r)$ for $B_r(\beta_*)$ 
such that $|N_\delta(r)| \le (1+ 2r/\delta)^d$. Moreover, for any $\Pi$, if $ \hat{\beta}_{\Pi}  \in B_r(\beta_*)$,
$$
\min_{\beta \in N_\delta(r)} 
\norm{y-\Pi X\beta}^2
\le \min_{\beta \in B_r(\beta_*)}
\norm{y-\Pi X\beta}^2 + \opnorm{X}^2 \delta^2.
$$
\end{lemma}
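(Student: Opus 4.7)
The plan is to treat the two assertions separately, with the content concentrated in the second.

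For the cardinality bound, I would invoke the standard volume-based covering argument for a Euclidean ball in $\reals^d$. Since $B_r(\beta_*)$ is a ball of radius $r$, a greedy $\delta$-packing—or equivalently, a volume comparison between the $\delta/2$-inflation of $B_r(\beta_*)$ and a single $\delta/2$-ball—produces a $\delta$-net of size at most $(1 + 2r/\delta)^d$ (e.g., Vershynin's Corollary~4.2.13). This is routine and can essentially be cited.

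The substance is the approximation bound, and the observation I would key on is the orthogonality of the least-squares residual. Let $\hat{\beta}_\Pi = (X^\top X)^{-1} X^\top \Pi^\top y$ be the unconstrained least-squares solution from~\prettyref{eq:beta_Pi}, and let $\tilde\beta \in N_\delta(r)$ satisfy $\lVert \hat{\beta}_\Pi - \tilde\beta \rVert \le \delta$. Since $\Pi X \hat\beta_\Pi$ is the orthogonal projection of $y$ onto the column span of $\Pi X$, the residual $y - \Pi X \hat\beta_\Pi$ is perpendicular to everything in that column span, in particular to $\Pi X(\hat\beta_\Pi - \tilde\beta)$. Pythagoras then gives
\[
\lVert y - \Pi X \tilde\beta \rVert^2 = \lVert y - \Pi X \hat\beta_\Pi \rVert^2 + \lVert \Pi X(\hat\beta_\Pi - \tilde\beta) \rVert^2,
\]
and since $\Pi$ is an isometry (orthogonal matrix), $\lVert \Pi X(\hat\beta_\Pi - \tilde\beta)\rVert^2 \le \opnorm{X}^2 \delta^2$. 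Finally, the hypothesis $\hat\beta_\Pi \in B_r(\beta_*)$ forces $\min_{\beta \in B_r(\beta_*)} \lVert y - \Pi X \beta\rVert^2 = \lVert y - \Pi X \hat\beta_\Pi\rVert^2$ (the unconstrained minimum is attained inside the ball), which together with $\tilde\beta \in N_\delta(r)$ yields the claimed inequality.

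No step is a genuine obstacle, but the crucial design choice is to approximate the unconstrained minimizer $\hat\beta_\Pi$ rather than an arbitrary $\beta \in B_r(\beta_*)$. A naive triangle inequality $\lVert y - \Pi X \tilde\beta\rVert \le \lVert y - \Pi X\beta\rVert + \opnorm{X}\delta$ would square to produce a cross term of order $\opnorm{X}\delta \cdot \lVert y - \Pi X\beta\rVert$ that is only linear in $\delta$; approximating the projection endpoint instead, so that the cross term vanishes by orthogonality, is what yields an error quadratic in $\delta$. This quadratic error is exactly what allows a sufficiently fine resolution $\delta$ to be chosen without the resulting net cardinality swamping the MGF bound~\prettyref{eq:MGF_expression} in the subsequent union bound.
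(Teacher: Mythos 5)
Your proof is correct and follows essentially the same route as the paper: cite the standard volume bound for the covering number, approximate the unconstrained least-squares solution $\hat\beta_\Pi$ by a net point, and use orthogonality of the residual to get a Pythagorean identity so the error is quadratic (not linear) in $\delta$. Your closing remark about why a naive triangle-inequality bound would only give a $O(\delta)$ cross term is exactly the observation that makes the paper's choice of $\delta$-resolution work downstream.
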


Next, we introduce a set of high-probability events to facilitate our analysis of the MLE.
\begin{lemma} \label{lem:events}
Suppose $\SNR \geq 1$, $r/\delta \le n^{2},$
$\eta \ge n^{-\epsilon/10}$,
and $\epsilon \eta n \ge 100d.$
The following events hold with probability $1-o(1):$ 
\begin{align*}
  \EE_1 &\triangleq \{ 
  \lVert X \beta_* - \Pi X \beta \rVert^2 \geq n^{-2-\epsilon} \norm{\beta_\ast}^2 
  (n-n_1),  \\ 
  & \qquad \forall n_1 \le (1-\eta) n, \forall \Pi \in \calS(n_1),  \forall \beta \in N_\delta(r) \} , \\
  \EE_2 &\triangleq \{ \opnorm{X} \leq C' \sqrt{n} \}, \\
  \EE_3 &\triangleq \{ \lVert \hat{\beta}_{\Pi} - \beta_* \rVert_2 \leq c \lVert \beta_* \rVert_2, \forall \Pi \}, 
\end{align*}
for some absolute constants $C', c,$, where
$\hat{\beta}_{\Pi}$ is defined in \prettyref{eq:beta_Pi}.
\end{lemma}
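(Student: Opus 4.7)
Events $\EE_2$ and $\EE_3$ are standard. For $\EE_2$, I invoke the Davidson--Szarek bound on the largest singular value of an $n \times d$ Gaussian matrix, $\opnorm{X} \le \sqrt{n} + \sqrt{d} + t$ with probability $\ge 1 - 2 e^{-t^2/2}$; choosing $t = \sqrt{n}$ and using $d = o(n)$ gives $\EE_2$ w.h.p.\ with some absolute $C'$. For $\EE_3$, since $\Pi_* = I$ one has $y = X\beta_* + w$, and
\[
\hat{\beta}_\Pi - \beta_* = (X^\top X)^{-1} X^\top (\Pi^\top - I) X \beta_* + (X^\top X)^{-1} X^\top \Pi^\top w.
\]
The two $\Pi$-dependent pieces are controlled uniformly by $\|(\Pi^\top - I) X \beta_*\| \le 2\|X\beta_*\|$ and $\|\Pi^\top w\| = \|w\|$; combining the standard high-probability bounds $\sigma_{\min}(X) \gtrsim \sqrt{n}$ (Bai--Yin, since $d = o(n)$), $\|X\beta_*\| \lesssim \sqrt{n}\|\beta_*\|$, $\|w\| \lesssim \sigma\sqrt{n}$, with $\sigma \le \|\beta_*\|$ (from $\SNR \ge 1$), yields $\|\hat{\beta}_\Pi - \beta_*\| \lesssim \|\beta_*\|$ uniformly in $\Pi$.

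The crux is $\EE_1$. Fixing $\Pi \in \calS(n_1)$ and $\beta \in N_\delta(r)$, the vector $Z \triangleq X\beta_* - \Pi X\beta$ is centered Gaussian whose covariance is block-diagonal along the cycle decomposition of $\Pi$: each fixed point contributes a scalar block $\|\gamma\|^2$ (with $\gamma = \beta_* - \beta$), and each $k$-cycle contributes a $k \times k$ symmetric circulant with diagonal $a = \|\beta_*\|^2 + \|\beta\|^2$ and cyclically-adjacent off-diagonal $-\rho$ (with $\rho = \iprod{\beta_*, \beta}$). Fourier-diagonalizing the circulant gives cycle eigenvalues $\lambda_j = a - 2\rho\cos(2\pi j/k)$ for $j = 0, \dots, k-1$, so $\|Z\|^2 \eqdistr \sum_i \lambda_i g_i^2$ with $g_i$ i.i.d.\ $\NN(0,1)$. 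Because $\beta \in B_r(\beta_*)$ with $r = O(\|\beta_*\|)$ forces $\rho \gtrsim \|\beta_*\|^2$, the non-zero-mode eigenvalues of a $k$-cycle are at least $c\|\beta_*\|^2/k^2$, and the MGF $\E[\exp(-s\|Z\|^2)]$ factorizes into the clean closed form $\prod_k (p^k - q^k)^{-n_k(\Pi)}$ (the same $p,q$ of the MGF lemma) with $p \pm q = \sqrt{1 + 2s\|\beta_* \pm \beta\|^2}$, which scales asymptotically as $\prod_k (k \, (2s\|\beta_*\|^2)^{(k-1)/2})^{-n_k}$ for $s\|\beta_*\|^2 \gg 1$.

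To bound $\Prob[\|Z\|^2 \le t]$ at $t = n^{-2-\epsilon}\|\beta_*\|^2(n-n_1)$, apply Markov and optimize $s \asymp N(\Pi)/t$ with $N(\Pi) = (n-n_1) - m(\Pi)$ the effective rank of the non-fixed-point covariance ($m$ the number of non-fixed-point cycles), producing a per-$(\Pi,\beta)$ bound of order $(Cn^{-2-\epsilon})^{N(\Pi)/2}/\prod_c k_c$. Summing over $\Pi \in \calS(n_1)$ by grouping by cycle type $(n_k)_{k\ge 2}$ (there are $n!/(n_1! \prod_k n_k! k^{n_k})$ permutations per type) reduces, via the standard exponential-generating-function identity, to extracting $[z^{n-n_1}]\prod_{k \ge 2}\exp(b_k z^k)$ for $b_k = (Cn^{-2-\epsilon})^{(k-1)/2}/k^2$, which by Cauchy's inequality is at most $\inf_{r>0} \exp(\sum_k b_k r^k)/r^{n-n_1}$. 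Since $b_k$ decays geometrically in $k$, the sum is dominated by the $k=2$ term $b_2 r^2 \sim n^{-(2+\epsilon)/2}r^2$; optimizing $r$ and combining with the crude $n!/n_1! \le n^{n-n_1}$ yields a concentration saving of order $n^{-c\epsilon(n-n_1)}$ for an absolute $c > 0$ under $\eta \ge n^{-\epsilon/10}$. The $\delta$-net over $\beta$ contributes an extra factor $(1 + 2n^2)^d = \exp(O(d\log n))$, absorbed by the concentration saving thanks to $\epsilon\eta n \ge 100 d$; and the outer union over $n_1 \le (1-\eta)n$ costs only a factor of $n$. Together these give $\Prob[\EE_1^c] = o(1)$.

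The main obstacle is balancing the $n^{n-n_1}$ enumeration cost of permutations with $n_1$ fixed points against the MGF concentration exponent. A naive bound that replaces each cycle's contribution by its smallest positive eigenvalue would require $\epsilon > 2$ or larger; the exponential-generating-function step for cycle types together with the quantitative hypotheses $\eta \ge n^{-\epsilon/10}$ and $\epsilon\eta n \ge 100d$ are precisely what allow the argument to go through for arbitrary $\epsilon > 0$.
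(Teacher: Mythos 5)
Your proposal is correct and follows essentially the same route as the paper: union bound over $(\beta,\Pi,n_1)$, Chernoff with the MGF of $\lVert X\beta_*-\Pi X\beta\rVert^2$, cycle decomposition giving the product $\prod_k(p^k-q^k)^{-n_k}$, and a cycle-type count to control the sum over $\calS(n_1)$; your explicit exponential-generating-function extraction $[z^{n-n_1}]\prod_{k\ge 2}\exp(b_k z^k)$ with Cauchy's bound is a hands-on substitute for the paper's appeal to the cycle-count MGF from \cite[eq.~(49)]{WWXY22}, and your decomposition $\hat\beta_\Pi-\beta_*=(X^\top X)^{-1}X^\top(\Pi^\top-I)X\beta_*+(X^\top X)^{-1}X^\top\Pi^\top w$ is a mild variant of the paper's simpler bound $\lVert\hat\beta_\Pi\rVert\le\lVert(X^\top X)^{-1}X^\top\rVert_{\rm op}\lVert y\rVert$. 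One imprecision worth flagging: the claim that $\beta\in B_r(\beta_*)$ with $r=O(\lVert\beta_*\rVert)$ forces $\rho=\iprod{\beta_*}{\beta}\gtrsim\lVert\beta_*\rVert^2$ is false --- the radius here is $r=c\lVert\beta_*\rVert$ with $c>1$, so $\beta$ can be near $-\beta_*$ or orthogonal to $\beta_*$, making $\rho$ negative or zero --- but your stated eigenvalue bound and the ensuing asymptotic $\prod_k\bigl(k(2s\lVert\beta_*\rVert^2)^{(k-1)/2}\bigr)^{-n_k}$ remain true after a short dichotomy on $|\rho|$ (precisely the case analysis the paper carries out in Lemma~\ref{lmm:pkqk}), so the conclusion survives.
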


Finally, we need a key lemma to bound the MGF of $\norm{X\beta_*-\Pi X\beta }^2 $. The proof crucially relies on the cycle decomposition of the permutation matrix $\Pi$. (See Appendices \ref{app:cycle} and \ref{app:MGF_bound_1} for details.)

\begin{lemma}\label{lmm:MGF_bound_1}
Suppose $\norm{\beta_*}/\sigma 
\ge n^{1+\epsilon/2}$, $\eta \ge n^{-\epsilon/10}$, and $C$ is any constant. 
Then for all sufficiently large $n$,
\begin{align*}
\sum_{n_1=0}^{(1-\eta)n} C^{n-n_1} \sum_{\Pi \in \calS(n_1) }  \expect{\exp\left( - \frac{1}{32\sigma^2} \norm{X\beta_*-\Pi X\beta }^2 \right) }
&\le n^{-\epsilon \eta n /10}.
\end{align*}
\end{lemma}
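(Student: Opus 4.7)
The plan is to evaluate the moment generating function $M(\Pi,\beta) := \Expect\bqth{\exp(-t\norm{X\beta_* - \Pi X\beta}^2)}$ with $t = 1/(32\sigma^2)$ exactly via the cycle decomposition of $\Pi$, and then to control the combinatorial sum over $\Pi \in \calS(n_1)$ via the exponential generating function for permutations together with a Cauchy coefficient estimate. Since the rows of $X$ in distinct cycles of $\Pi$ are independent Gaussians, for a cycle $C$ of length $k$ the vector $(X\beta_* - \Pi X\beta)_{i \in C}$ is centered Gaussian with circulant $k\times k$ covariance $\Sigma_k$ whose eigenvalues are $\lambda_j = \norm{\beta_*}^2 + \norm{\beta}^2 - 2\iprod{\beta_*}{\beta}\cos(2\pi j/k)$. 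Factoring the characteristic polynomial of this circulant (the same type of computation as in~\cite{WWXY22}) yields the closed form $\det(I+2t\Sigma_k) = (P^k - \tilde Q^k)^2$, and hence the uniform bound
$$
F_k := \det(I+2t\Sigma_k)^{-1/2} \le \frac{1}{P^k - Q^k}, \qquad k \ge 1,
$$
where $P \ge Q \ge 0$ satisfy $(P-Q)^2 = A_-^2$ and $(P+Q)^2 = A_+^2$ with $A_\pm^2 := 1 + 2t\norm{\beta \pm \beta_*}^2$; when $\iprod{\beta_*}{\beta}<0$ one has $\tilde Q=-Q$, but the bound remains valid because $P^k+Q^k \ge P^k-Q^k$. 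The consequences I will need are $F_1 = 1/A_- \le 1$; the uniform estimate $F_2 = 1/(A_+A_-) \lesssim n^{-1-\epsilon/2}$, following from $A_+^2 A_-^2 \ge 1+4t\norm{\beta_*}^2 \ge \SNR/8$; and $P \ge \max(A_+,A_-)/2 \gtrsim n^{1+\epsilon/2}$.

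Next, set $m := n - n_1$ and apply the exponential formula for permutation cycle types:
$$
\sum_{\Pi \in \calS(n_1)} \prod_k F_k^{n_k(\Pi)} = F_1^{n_1} \cdot \frac{n!}{n_1!} \cdot [z^m]\exp(G(z)), \qquad G(z) := \sum_{k \ge 2} \frac{F_k z^k}{k}.
$$
Using $1-(Q/P)^k \ge 1-(Q/P)^2$ for $k \ge 2$ and $Q \in [0,P]$, one obtains $F_k \le F_2/P^{k-2}$, whence $G(R) \le 2 F_2 R^2$ for every $R \le P/2$. Choosing $R := \min\{\sqrt{m/(4F_2)},\,P/2\}$ forces $G(R) \le m/2$ in both regimes, and a Cauchy coefficient estimate gives $[z^m]\exp(G(z)) \le e^{m/2}/R^m$. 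Combined with $n!/n_1! \le n^m$, $F_1 \le 1$, and the $C^{n-n_1}$ prefactor, this yields
$$
C^{n-n_1} \sum_{\Pi \in \calS(n_1)} \prod_k F_k^{n_k(\Pi)} \le \pth{\frac{nCe^{1/2}}{R}}^{m}.
$$

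Finally, estimate this base in each regime. When $R = \sqrt{m/(4F_2)}$, inserting $F_2 \lesssim n^{-1-\epsilon/2}$ and using $m \ge \eta n$ with $\eta \ge n^{-\epsilon/10}$ gives $nCe^{1/2}/R \lesssim n^{-\epsilon/5}$; when $R = P/2$, the lower bound $P \gtrsim n^{1+\epsilon/2}$ yields the same estimate up to constants. Hence the per-$n_1$ contribution is at most $(\text{const})^m \, n^{-\epsilon m/5}$, and summing the resulting geometric series over $n_1 \le (1-\eta)n$ (equivalently $m \ge \eta n$) is dominated by $m = \eta n$, producing a total bound of order $n \cdot n^{-\epsilon\eta n/5}$, which is $\le n^{-\epsilon\eta n/10}$ for $n$ sufficiently large.

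The hardest step is the cycle-wise MGF identity: it rests on a nontrivial circulant-determinant factorization into $(P^k - \tilde Q^k)^2$, and one must handle the sign of $\iprod{\beta_*}{\beta}$ with care because odd-cycle contributions acquire the opposite sign when $\iprod{\beta_*}{\beta}<0$. The generating-function manipulation and the choice of $R$ are then routine; the remaining subtlety is verifying that the bounds $F_2 \lesssim n^{-1-\epsilon/2}$ and $P \gtrsim n^{1+\epsilon/2}$ hold \emph{uniformly} in $\beta$ (not just for $\beta$ close to $\beta_*$), which reduces to the symmetric lower bound $A_+^2 + A_-^2 \ge 2 + \SNR/8$.
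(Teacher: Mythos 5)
Your proof is correct, and it reaches the same conclusion by a route that differs from the paper's in the combinatorial bookkeeping. Both arguments begin from the same analytic heart — the per-cycle factorization of the MGF, $\Expect[\exp(-t\lVert X\beta_* - \Pi X\beta\rVert^2)] = \prod_k (p^k - q^k)^{-n_k(\Pi)}$ (the paper's \prettyref{lem:mgf}), and the observation that the fixed-point factor is at most $1$ while the $k\ge 2$ factors are small when $\sqrt{t}\lVert\beta_*\rVert$ is large. Where the two diverge is how the resulting product is summed over $\Pi\in\calS(n_1)$. The paper isolates the derangement $\tilde\Pi$ on the non-fixed points, bounds each $k\ge 2$ factor crudely by $(c_0\sqrt{t}\lVert\beta_*\rVert)^{-(k-1)}$, and then invokes a ready-made estimate from~\cite{WWXY22} (eq.~(49) there) for $\sum_{\tilde\Pi}(c_0\sqrt{\SNR})^{\fc(\tilde\Pi)}$, i.e.\ the MGF of the cycle count of a uniform derangement. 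You instead keep the finer per-cycle bound $F_k\le F_2/P^{k-2}$, plug it into the exponential formula for permutations by cycle type, and extract the coefficient with a Cauchy estimate at a carefully chosen radius $R=\min\{\sqrt{m/(4F_2)},\,P/2\}$. This is more self-contained — it does not lean on the external derangement lemma — and it makes transparent where the $n^{1+\epsilon/2}$ scale enters (through $F_2\lesssim n^{-1-\epsilon/2}$ and $P\gtrsim n^{1+\epsilon/2}$, both of which you correctly derive from the parallelogram identity, uniformly in $\beta$ and in the sign of $\iprod{\beta_*}{\beta}$). The numerics work out as you claim: $R\gtrsim \sqrt{\eta}\,n^{1+\epsilon/4}\gtrsim n^{1+\epsilon/5}$ using $\eta\ge n^{-\epsilon/10}$, the per-$n_1$ term is $\lesssim (C'n^{-\epsilon/5})^m$, and the geometric sum over $m\ge\eta n$ absorbs the constant $C'^{\eta n}$ into $n^{-\epsilon\eta n/10}$ for large $n$. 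In short, same skeleton, but you replace the paper's cited random-derangement MGF bound by an EGF--Cauchy argument; the trade is a bit more work in exchange for a self-contained combinatorial step.
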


Now, we are ready to prove~\prettyref{eq:MLE_error_big}. By the definition of MLE given in \prettyref{eq:def_MLE}, 
\begin{align*}
\overlap(\hat{\Pi}, \id) \le \left( 1- \eta \right) n 
& \Rightarrow
\min_\beta \norm{y - \Pi X \beta}^2
 \le \min_\beta \norm{y - X \beta}^2\\
& \qquad \text{for some } \Pi \in \calS(n_1) \text{ and }  n_1 \le (1-\eta) n .
\end{align*}
Recall that $\hat{\beta}_\Pi= \argmin_\beta \norm{y - \Pi X \beta}^2$ and the definition of $\calE_3$. By letting 
$r=c\|\beta_\ast\|_2$, we have
\begin{align*}
\min_\beta \norm{y - \Pi X \beta}^2
 \le \min_\beta \norm{y - X \beta}^2, \, \calE_3
 & \Rightarrow \min_{\beta \in B_r(\beta_*)} 
 \norm{y - \Pi X \beta}^2 
 \le \norm{y - X \beta_*}^2=\norm{w}^2 \\
 & \Rightarrow \min_{\beta \in N_\delta(r) }
 \norm{y - \Pi X \beta}^2 
 \le  \norm{w}^2 + \opnorm{X}^2 \delta^2,
\end{align*}
where the last implication follows from 
\prettyref{lem:discretization}.
Note that for any $\beta $, 
\begin{align*}
\norm{y - \Pi X \beta}^2 
 \le  \norm{w}^2 + \opnorm{X}^2 \delta^2 
& \Rightarrow  
 \norm{X\beta_*+w - \Pi X \beta}^2 
 \le  \norm{w}^2 +  \opnorm{X}^2 \delta^2 \\ 
 & \Rightarrow 
 2\iprod{X\beta_*-\Pi X\beta}{w}
 \le - \norm{X\beta_*-\Pi X\beta }^2 
+  \opnorm{X}^2 \delta^2.
\end{align*}
Now, recalling the definitions of $\calE_1,\calE_2$, we choose $\delta = C' \sqrt{\eta/2}   n^{-1-\epsilon/2} \norm{\beta_*}$, so that on the event $\calE_1\cap \calE_2$, for all 
$\Pi \in \calS(n_1)$ and all $ n_1 \le (1-\eta) n,$
$$
\norm{X\beta_*-\Pi X\beta }^2 
\ge 2 \opnorm{X}^2 \delta^2, \forall \beta \in N_\delta(r),
$$
and hence 
\begin{align*}
& \min_{\beta \in N_\delta(r) }
 \norm{y - \Pi X \beta}^2 
 \le  \norm{w}^2 +  \opnorm{X}^2 \delta^2, \, \calE_1 \cap \calE_2 \\
& \Rightarrow \exists \beta \in N_\delta(r): 
 2\iprod{X\beta_*-\Pi X\beta}{w}
 \le - \frac{1}{2} \norm{X\beta_*-\Pi X\beta }^2 .
\end{align*}
In conclusion, we have shown that  
\begin{align*}
\overlap(\hat{\Pi}, \id) \le \left( 1- \eta \right) n, \calE_1 \cap \calE_2 \cap \calE_3 
& \Rightarrow 
2\iprod{X\beta_*-\Pi X\beta}{w}
 \le - \frac{1}{2} \norm{X\beta_*-\Pi X\beta }^2\\
& \qquad \text{for some } \Pi \in \calS(n_1), n_1 \le (1-\eta) n, \text{ and } \beta \in N_\delta(r).
\end{align*}

Now, for each fixed $\Pi$ and $\beta$, using the Gaussian tail bound, we get that 
\begin{align*}
 \prob{2\iprod{X\beta_*-\Pi X\beta}{w}
 \le - \frac{1}{2} \norm{X\beta_*-\Pi X\beta }^2} 
& \le \expect{\exp\left( - \frac{1}{32\sigma^2} \norm{X\beta_*-\Pi X\beta }^2 \right) }.
 \end{align*}
It follows from the union bound that 
\begin{align*}
&\prob{\overlap(\hat{\Pi}, \id) \le \left( 1- \eta \right) n,\calE_1 \cap \calE_2\cap \calE_3} \\
& \le |N_\delta(r) |
\sum_{n_1 \le (1-\eta) n} \sum_{\Pi \in \calS(n_1)}
\expect{\exp\left( - \frac{\norm{X\beta_*-\Pi X\beta }^2 }{32\sigma^2} \right) }.
\end{align*}
Finally, by~Lemma \ref{lem:discretization}, 
$|N_\delta(r)|\le(1+2r/\delta)^d$. Recall that we set $\delta=C' \sqrt{\eta/2}   n^{-1-\epsilon/2} \norm{\beta_*}$
and $r=c\norm{\beta_*}$ for constants $c, C'>0.$ 
Therefore, 
$$
\left|N_\delta(r)\right|
\le \left(C n^{1+\epsilon/2}/\sqrt{\eta} \right)^d
$$
for some universal constant $C>0.$
Combining the last displayed equation with~\prettyref{lmm:MGF_bound_1} yields that 
\begin{align*}
 |N_\delta(r)|
\sum_{n_1 \le (1-\eta) n} \sum_{\Pi \in \calS(n_1)}
\expect{\exp\left( - \frac{\norm{X\beta_*-\Pi X\beta }^2 }{32\sigma^2} \right) } 
&\le
\left(C n^{1+\epsilon/2}/\sqrt{\eta} \right)^d   n^{-\epsilon \eta n /10}
\le n^{-\epsilon \eta n/20},
\end{align*}
where the last inequality holds for all sufficiently large $n$ due to
the facts that 
$\epsilon \eta n \ge 100d$ and $\eta \ge n^{-\epsilon/10}$.
Finally, applying~\prettyref{lem:events}, we conclude that 
\begin{align*}
\prob{\overlap(\hat{\Pi}, \id) \le \left( 1- \eta \right) n}
& \le \prob{\overlap(\hat{\Pi}, \id) \le \left( 1- \eta \right) n,\calE_1 \cap \calE_2\cap \calE_3} \\
&\quad  + \prob{\calE_1^c} + \prob{\calE_2^c}+\prob{\calE_3^c} \\ &\le o(1).
\end{align*}

\subsection{Proof for  permutations with few errors}
\label{sec:pf-near}

In this part, we focus on the permutations that are close to the ground truth and prove that 
\begin{align}
\prob{  \left( 1- \eta \right)n \le \overlap(\hat{\Pi}, \id) \le n-2} \le n^{-\Omega(1)}, \label{eq:MLE_error_small}
\end{align}
provided that $\sigma/\norm{\beta_\ast}\le n^{-2-\epsilon}$, $\eta \le \epsilon/8$, and $d=o(n).$ 

In this case, we can no longer tolerate the $n^d$ factor arising from the discretization of the $\beta$ parameter.
To address the high-dimensional scenario where $d=o(n),$ we instead adopt the proof strategy outlined by~\cite{pananjady2017linear} 
to analyze the QAP formulation~\prettyref{eq:QAP}.
However, 
achieving the sharp threshold necessitates a more meticulous analysis than that employed by~\cite{pananjady2017linear}. 

We first state several useful auxiliary lemmas. Recall that $\calS(m)$ denotes the set of permutation matrices with 
$m$ fixed points,
and recall the projection matrices $\PPiX$ and $\PPiXperp$ as defined in~\prettyref{eq:PPiX1}--\prettyref{eq:PPiX2}.
\begin{lemma}\label{lmm:noise_projection}
Let $n \ge 2$. Define $\calE_1$ such that for all $n_1 \le n-2$ and all $\Pi \in \calS(n_1),$
$$
\norm{\PPiX(w)}^2 - \norm{\PX(w)}^2  \le 10 \sigma^2 (n-n_1) \log n. 
$$
Then $\prob{\calE_1} \ge 1-4n^{-2}.$
\end{lemma}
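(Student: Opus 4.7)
The plan is to rewrite the difference as a single quadratic form $\norm{\PPiX w}^2 - \norm{\PX w}^2 = w^\top(\PPiX - \PX) w$ and control it via a one-sided chi-square tail bound, conditional on $X$. The key geometric observation is that $\PPiX$ and $\PX$ are both rank-$d$ projections onto subspaces that share a common part $S \triangleq \mathrm{col}(X) \cap \mathrm{col}(\Pi X)$ of dimension $k$. Decomposing $\PX = P_S + P_{U_1}$ and $\PPiX = P_S + P_{U_2}$, where $U_i$ is the orthogonal complement of $S$ inside the corresponding column span (each of dimension $d - k$), the common piece cancels and
\[
w^\top(\PPiX - \PX) w \;=\; w^\top P_{U_2} w - w^\top P_{U_1} w \;\le\; w^\top P_{U_2} w.
\]
Discarding the non-negative term on the right is essential, since it replaces a difference of two potentially strongly correlated chi-squares by a single chi-square, which admits sharp one-sided concentration.

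Next, I would bound $d - k$ in terms of $n - n_1$ using the cycle structure of $\Pi$. Each cycle of length $\ell \ge 2$ makes the block of $\Pi - I$ along that cycle have rank $\ell - 1$ (its row-sums vanish, and no further linear dependency occurs), while fixed points contribute nothing, so $\mathrm{rank}(\Pi - I) \le n - n_1$. Since $\mathrm{col}(\Pi X) \subseteq \mathrm{col}(X) + \mathrm{col}((\Pi - I) X)$, this gives $\dim(\mathrm{col}(X) + \mathrm{col}(\Pi X)) \le d + (n - n_1)$, hence $k \ge d - (n - n_1)$ and $d - k \le \min(d, n - n_1) \le n - n_1$. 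Conditional on $X$ (which has full column rank almost surely), $P_{U_2}$ is then a deterministic projection of rank $r \le n - n_1$, and $w^\top P_{U_2} w \sim \sigma^2 \chi^2_r$.

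To conclude, I would apply the Laurent--Massart tail bound $\prob{\chi^2_r \ge r + 2\sqrt{r x} + 2 x} \le e^{-x}$ with $x = 2(n - n_1) \log n$. This yields $w^\top P_{U_2} w \le \sigma^2 (n - n_1) \bigl( 1 + 2\sqrt{2 \log n} + 4 \log n \bigr) \le 10 \sigma^2 (n - n_1) \log n$ for all sufficiently large $n$, with failure probability at most $n^{-2(n - n_1)}$. A union bound over $\Pi \in \calS(n_1)$, using $|\calS(n_1)| \le \binom{n}{n_1} (n - n_1)! \le n^{n - n_1}$, followed by summation over $n_1 = 0, 1, \ldots, n - 2$ (so that $n - n_1 \ge 2$), produces a total failure probability of at most $\sum_{n_1 = 0}^{n-2} n^{n - n_1} \cdot n^{-2(n - n_1)} = \sum_{j = 2}^{n} n^{-j} \le 2 n^{-2}$, and absorbing constants to handle small $n$ and the measure-zero rank-deficient event for $X$ gives the stated $\prob{\calE_1} \ge 1 - 4 n^{-2}$. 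The main obstacle I anticipate is the subspace-intersection bound $d - k \le n - n_1$ via the cycle-structure rank of $\Pi - I$; once this geometric fact is in hand, what remains is a routine chi-square concentration plus union bound.
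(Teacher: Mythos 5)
Your proof is correct and follows essentially the same route as the paper: write $\norm{\PPiX(w)}^2 - \norm{\PX(w)}^2$ as a difference of two $\sigma^2\chi^2_m$ variables with $m \le \min(d,\,n-n_1)$ (the paper cites this representation from Pananjady et al., while you rederive it via the subspace decomposition through $S=\mathrm{col}(X)\cap\mathrm{col}(\Pi X)$ and the rank bound $\mathrm{rank}(\Pi-I)\le n-n_1$ from the cycle structure), then apply a chi-square tail bound with $t=2(n-n_1)\log n$ and union-bound over $\calS(n_1)$ and $n_1$. Your one additional observation---that the nonnegative term $\norm{P_{U_1}w}^2$ can simply be dropped---is a small genuine improvement: it replaces the paper's two-sided treatment (upper tail for $Z_m$, lower tail for $\tilde Z_m$) with a single one-sided tail bound, halving the per-$\Pi$ failure probability while the rest of the argument is unchanged.
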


\begin{lemma}\label{lmm:signal_projection}
Suppose $\eta \le \epsilon/8$. Define $\calE_2$ such that for all $(1-\eta) n \le n_1 \le n-2$ and all $\Pi \in \calS(n_1),$
\begin{align*}
\norm{\PPiXperp(X\beta_*)}^2
\ge n^{-4-\epsilon} \norm{\beta_\ast}^2 (n-n_1).
\end{align*}
 Then $\prob{\calE_2} \ge 1-n^{-\epsilon/8}$.
\end{lemma}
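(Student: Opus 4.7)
The plan is to reduce the statement to an anti-concentration estimate for a Gaussian quadratic form and to combine it with a cycle-type union bound. By rotational invariance of the iid Gaussian design under rotations of $\beta_\ast$, I may assume without loss of generality that $\beta_\ast=\norm{\beta_\ast}e_1$, so that $X\beta_\ast=\norm{\beta_\ast}\,x$ with $x:=X_{\cdot,1}\sim\NN(0,I_n)$. Using $\PXperp x=0$ (since $x\in\mathrm{col}(X)$) and orthogonality of $\Pi$, I obtain
$$\norm{\PPiXperp(X\beta_\ast)}^2=\norm{\beta_\ast}^2\,\norm{\PXperp \Pi^\top x}^2=\norm{\beta_\ast}^2\,\norm{\PXperp z}^2,\qquad z:=(\Pi^\top-I)x.$$
Conditional on $x$, write $\mathrm{col}(X)=\mathrm{span}(x)\oplus V_0$ where $V_0:=P_x^\perp\mathrm{col}(X_{\cdot,2:d})$; since $X_{\cdot,2:d}$ is independent of $x$ with iid standard Gaussian columns, $V_0$ is uniformly distributed on the Grassmannian of $(d-1)$-dimensional subspaces of $\{x\}^\perp$. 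Setting $r:=P_x^\perp z$ and $B:=\norm{P_{V_0}r}^2/\norm{r}^2$, this yields the structural identity
$$\norm{\PXperp z}^2=(1-B)\,\norm{r}^2,$$
with $B\sim\mathrm{Beta}((d-1)/2,(n-d)/2)$ conditionally on $x$.

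I split the target $(1-B)\norm{r}^2\ge n^{-4-\epsilon}(n-n_1)$ into three uniform estimates whose product gives the desired bound. For the Beta factor, a density estimate yields $\Prob(1-B\le n^{-a})\le n^{-a(n-d)/2+O(d)}$, which for any fixed $a\in(\epsilon/4,\epsilon/2)$ overwhelms a union bound over the at most $\sum_{k=2}^{\eta n}\binom{n}{k}k!\le n^{\eta n+1}$ relevant permutations, using $\eta\le\epsilon/8<a/2$. For the reduction from $\norm{r}^2$ to $\norm{z}^2$, the identity $\iprod{x}{z}=-\norm{z}^2/2$ (immediate from $\norm{\Pi^\top x}=\norm{x}$) gives $\norm{r}^2=\norm{z}^2(1-\norm{z}^2/(4\norm{x}^2))$; combining the termwise bound $\norm{z}^2\le 4\sum_{i\notin\mathrm{Fix}(\Pi)}x_i^2$ with the high-probability uniform bound $\sum_{i\in T}x_i^2\le(1-c)\norm{x}^2$ for all $T\subset[n]$ with $|T|\le\eta n$ (which follows for $\eta$ small from concentration of the top $\lceil\eta n\rceil$ order statistics of iid $\chi^2_1$ variables) yields $\norm{r}^2\ge c\,\norm{z}^2$ uniformly over $\Pi$. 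It thus remains to lower-bound $\norm{z}^2$ by $\Theta(n^{-4-\epsilon+a}(n-n_1))$ uniformly.

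The crux and the main obstacle is this uniform anti-concentration of $\norm{z}^2=x^\top M x$, with $M=2I-\Pi-\Pi^\top$. Since $M$ is block-diagonal across the cycles of $\Pi$ and a non-trivial cycle of length $\ell\ge 2$ contributes eigenvalues $2(1-\cos(2\pi j/\ell))$ for $j=0,\ldots,\ell-1$, the matrix $M$ has rank $k-m$ (where $k:=n-n_1$ and $m$ is the number of non-trivial cycles of $\Pi$) and smallest positive eigenvalue $\mu_{\min}(M)\ge 2(1-\cos(2\pi/\ell_{\max}))\ge c/\ell_{\max}^2\ge c/k^2$. Hence $\norm{z}^2\ge\mu_{\min}(M)\,\chi^2_{k-m}$, and the chi-squared small-ball bound $\Prob(\chi^2_r\le s)\le(s/2)^{r/2}/\Gamma(r/2+1)$ yields
$$\Prob\bigl(\norm{z}^2\le t\bigr)\le\frac{(t k^2/(2c))^{(k-m)/2}}{\Gamma((k-m)/2+1)}.$$
Taking $t=\Theta(n^{-4-\epsilon+a}k)$ and stratifying the union bound over the at most $n^k$ permutations with $k$ non-fixed points by cycle type, the dominant contribution comes from a single transposition ($k=2$, $m=1$, $\mu_{\min}=4$): it contributes $\binom{n}{2}\cdot O(n^{-2-(\epsilon-a)/2})=O(n^{-(\epsilon-a)/2})=o(n^{-\epsilon/8})$ since $a<\epsilon/2$, and all larger $k$ contribute strictly smaller terms because the $n^k$ count is overpowered by the $(n^{-4-\epsilon+a}k^3)^{(k-m)/2}$ factor. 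The sharp exponent $-4-\epsilon$ matches precisely the single-transposition case where $\norm{z}^2=2(x_i-x_j)^2$ must avoid being too small uniformly over all $\binom{n}{2}$ pairs; any looser handling of $\mu_{\min}$ or of the effective dimension $k-m$ would degrade the resulting threshold.
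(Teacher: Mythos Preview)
Your decomposition $\norm{\PPiXperp(X\beta_\ast)}^2=\norm{\beta_\ast}^2(1-B)\,\norm{r}^2$ with $1-B\sim\mathrm{Beta}((n-d)/2,(d-1)/2)$ and $\norm{r}^2=\norm{P_{x}^\perp\Pi^\top x}^2$ is exactly the paper's reduction (its $\xi^2$ is your $1-B$, and its $\norm{\calP_{(\Pi x_1)^\perp}x_1}^2$ equals your $\norm{r}^2$), and the Beta tail estimate together with $\eta\le\epsilon/8$ is handled the same way. The genuine gap is in the anti-concentration of $\norm{z}^2=x^\top(2I-\Pi-\Pi^\top)x$. Bounding $\norm{z}^2\ge\mu_{\min}(M)\,\chi^2_{k-m}$ with the crude $\mu_{\min}\ge c/k^2$ yields, after absorbing the $\Gamma$ factor by Stirling, a per-type contribution of order $n^k\cdot(Cn^{-4-\epsilon+a}k^2)^{(k-m)/2}$. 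For the all-transpositions type ($m=k/2$) this equals $(C'n^{-\epsilon+a}k^2)^{k/4}$; with $k=\Theta(\eta n)$ and $\epsilon$ small the base is $\Theta(n^{2-\epsilon+a})\gg1$, so your assertion that ``the $n^k$ count is overpowered'' is false. Replacing $k^2$ by $\ell_{\max}^2$ fixes this particular type but still fails for mixed types (e.g.\ one $(k/2)$-cycle together with $k/4$ transpositions): the smallest eigenvalue alone does not carry enough of the spectrum of $M$, whose nonzero eigenvalues on an $\ell$-cycle range from $\Theta(\ell^{-2})$ up to $4$.

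The paper closes this step by a Chernoff/MGF bound that retains the full spectral product. Via \prettyref{lem:mgf} and \prettyref{lmm:pkqk} it obtains $\Expect\bigl[\exp(-s\norm{x\pm\Pi x}^2)\bigr]\le(c_0\sqrt{s})^{-(n-\fc(\Pi))}\le(c_0\sqrt{s})^{-k/2}$; the base here scales like $n^{-2}$ rather than $n^{-4}k^2$, so after multiplication by $n^k=(n^2)^{k/2}$ the contribution is $(C'n^{-\epsilon/4})^{k/2}$, geometrically summable over all $k$ and all cycle types simultaneously. If you want to keep a direct small-ball argument, you must use the determinant-based estimate $\Prob\bigl(\sum_j\lambda_jg_j^2\le t\bigr)\le(t/2)^{(k-m)/2}\big/\bigl(\Gamma((k-m)/2+1)\prod_j\sqrt{\lambda_j}\bigr)$ together with the identity $\prod_j\lambda_j=\prod_i\ell_i^2$, and stratify the count by the full cycle type rather than by $k$ alone.
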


\begin{lemma}\label{lmm:MGF_bound_2}
Suppose $\sigma/\norm{\beta_*} \le n^{-2-\epsilon/2}$, $\eta \le \epsilon/8$, and $C$ is any fixed constant. Then for all sufficiently large $n$,
\begin{align*}
 \sum_{n_1 \ge (1-\eta) n}^{n-2} C^{n-n_1} \sum_{\Pi \in \calS(n_1) }  \expect{\exp\left( - \frac{1}{32\sigma^2} \norm{\PPiXperp(X\beta_*)}^2\right)}
& \le n^{-\epsilon/8}.
\end{align*}
\end{lemma}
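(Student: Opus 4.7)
The plan is to bound the MGF $\E[\exp(-\|\PPiXperp X\beta_*\|^2/(32\sigma^2))]$ permutation by permutation by exploiting the Gaussian structure of $X$ together with the cycle decomposition of $\Pi$, and then sum over $\Pi$ grouped by cycle type.

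\textbf{Step 1: Reformulation.} Since $\PPiXperp = \Pi P_{X^\perp}\Pi^\top$ and $P_{X^\perp} X\beta_* = 0$, writing $\xi = X\beta_*$ and $\tilde Y = \Pi^\top \xi - \xi$ yields
\begin{equation*}
\|\PPiXperp X\beta_*\|^2 = \|P_{X^\perp} \tilde Y\|^2.
\end{equation*}
The vector $\tilde Y$ is supported on the non-fixed coordinates of $\Pi$ and is a deterministic function of $\xi$. Picking an orthonormal basis of $\mathbb{R}^d$ with first vector $\beta_*/\|\beta_*\|$, write $X = (\xi/\|\beta_*\|)u_1^\top + W U_\perp^\top$ with $W \in \mathbb{R}^{n\times(d-1)}$ Gaussian and independent of $\xi$. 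Conditioning on $\xi$, rotational invariance of $W$ gives $\|P_{X^\perp}\tilde Y\|^2 = \|\tilde Y^\perp\|^2(1-B)$, where $\tilde Y^\perp = (I-P_\xi)\tilde Y$ and $B \sim \text{Beta}((d-1)/2,(n-d)/2)$ is independent of $\xi$. The identity $\xi^\top \tilde Y = -\|\tilde Y\|^2/2$ (from $\|\tilde Y\|^2 = 2\|\xi\|^2 - 2\xi^\top\Pi^\top\xi$) then gives $\|\tilde Y^\perp\|^2 = \|\tilde Y\|^2(1 - \|\tilde Y\|^2/(4\|\xi\|^2))$.

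\textbf{Step 2: Per-$\Pi$ MGF via cycle decomposition.} On the $\Pi$-independent event $\calB = \{B \le 1/2\}$ and the $\Pi$-dependent event $\{\|\tilde Y\|^2 \le 2\|\xi\|^2\}$ (each of probability $1 - e^{-\Omega(n)}$), one has $\|\PPiXperp X\beta_*\|^2 \ge \|\tilde Y\|^2/4$, reducing the main MGF term to $\E[\exp(-\|\tilde Y\|^2/(128\sigma^2))]$. Since $\tilde Y \sim \NN(0, \|\beta_*\|^2 M)$ with $M = 2I - \Pi - \Pi^\top$, the Gaussian MGF formula yields $\det(I + (\SNR/64)M)^{-1/2}$. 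The nonzero eigenvalues of $M$ split across the cycle decomposition of $\Pi$: each non-fixed cycle of length $\ell$ contributes $\{4\sin^2(\pi m/\ell):m=1,\ldots,\ell-1\}$, whose product is $\ell^2/4^{\ell-1}$. Combined with $\SNR \ge n^{4+\epsilon}$, this produces the per-$\Pi$ MGF bound
\begin{equation*}
(64/\SNR)^{(k-c)/2} \prod_{\ell \ge 2} \ell^{-n_\ell},
\end{equation*}
where $k = n - n_1$, $n_\ell$ is the number of $\ell$-cycles of $\Pi$, and $c = \sum_{\ell\ge 2} n_\ell$ is the number of non-fixed cycles.

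\textbf{Step 3: Combinatorial sum.} The count $|\{\Pi\in\calS(n_1):\text{type}(\Pi)=\lambda\}| = n!/(n_1!\prod_\ell \ell^{n_\ell} n_\ell!)$ combines with the factor $\prod\ell^{-n_\ell}$ from the MGF to cancel one copy of $\prod\ell^{n_\ell}$ in the denominator. Using $k - c = \sum_{\ell \ge 2}(\ell-1) n_\ell \ge k/2$, with equality exactly when $\Pi$ consists entirely of $2$-cycles (the worst case), the total sum reduces to a geometric series $\sum_{k \ge 2}(C/n^{\epsilon/4})^k = O(n^{-\epsilon/2})$, comfortably below the target $n^{-\epsilon/8}$.

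\textbf{Main obstacle.} The contributions from the bad events must remain controlled even after summing over $|\bigcup_{n_1\ge(1-\eta)n}\calS(n_1)| \le n^{\eta n}$ permutations. The Beta event $\calB^c$ is $\Pi$-independent and contributes only one $e^{-\Omega(n)}$ term, easily absorbed. The difficult part is $\{\xi^\top\Pi\xi < 0\}$: Hanson-Wright yields only $e^{-\Omega(n)}$ per $\Pi$, which is a priori too weak against the $n^{\eta n}$ combinatorial factor. The resolution requires either a sharper, cycle-type-dependent tail bound on the Gaussian quadratic form $\xi^\top\Pi\xi$ that provides a compensating $\prod\ell^{-n_\ell}$ factor, or a direct integration of the full MGF (via the joint distribution of $R = \|\xi\|$, $q = u^\top M u$, and $B$, all mutually independent) that bypasses the need to condition on this event entirely.
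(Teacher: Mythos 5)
Your plan matches the paper's at the top level (isolate the Beta-type factor from $P_{X^\perp}$, reduce to a one-dimensional quantity, bound the MGF via the cycle decomposition of $2I-\Pi-\Pi^\top$, then sum combinatorially), and your Step~3 cycle-type sum is essentially correct. However there are two gaps in Step~2, and the second is one you already flag but do not close.

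First, the Beta event $\calB = \{B \le 1/2\}$ is \emph{not} ``easily absorbed.'' The lemma is a bound on the \emph{sum} $\sum_{n_1,\Pi} C^{n-n_1}\,\E[\exp(-\cdot)]$, so the complement's contribution is $\Prob[\calB^c] \cdot \sum_{n_1,\Pi} C^{n-n_1} \approx \Prob[\calB^c]\,(Cn)^{\eta n}$. Since $\Prob[\calB^c]$ is only $e^{-c n}$ for a small absolute constant $c$, and $\eta$ is allowed to be the fixed constant $\epsilon/8$, the product $(Cn)^{\eta n}e^{-cn}$ diverges. The paper resolves this by splitting at a much smaller threshold $\tau^2 \asymp n^{-\epsilon/2}$ (rather than $1/2$) and invoking the Dasgupta--Gupta projection tail bound, which gives $\Prob[\xi^2 \le \tau^2] \le n^{-\epsilon n/5}$, superpolynomial in $n^n$ and hence enough to beat the union bound; the factor $\tau\asymp n^{-\epsilon/4}$ lost in the main term is absorbed because $\sigma/\norm{\beta_*}\le n^{-2-\epsilon/2}$ still leaves $\sigma/(\norm{\beta_*}\tau)\le n^{-2-\epsilon/4}$. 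Second, as you correctly observe, the $\Pi$-dependent event $\{\xi^\top\Pi\xi < 0\}$ has per-$\Pi$ tail only $e^{-\Omega(n)}$ and cannot survive the $n^{\eta n}$ union bound — and your suggested workarounds are not carried out. The paper sidesteps this event entirely with a deterministic inequality: writing $x_1 = X\beta_*/\norm{\beta_*}$, one has $\norm{\calP_{(\Pi x_1)^\perp}x_1}^2 = \norm{x_1}^2 - \iprod{x_1}{\Pi x_1}^2/\norm{x_1}^2 \ge \norm{x_1}^2 - |\iprod{x_1}{\Pi x_1}| \ge \tfrac12\min\{\norm{x_1-\Pi x_1}^2,\norm{x_1+\Pi x_1}^2\}$, valid regardless of the sign of $\iprod{x_1}{\Pi x_1}$. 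The MGF of the min is then bounded by a union over $s=\pm1$ and an application of the cycle-type MGF bound (the paper's Lemma~\ref{lem:mgf}, which is essentially what your determinant computation re-derives in the special case $\beta=\pm\beta_*$). This min trick is the missing ingredient that makes the argument go through without any $\Pi$-dependent bad event.
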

Now, we are ready to prove~\prettyref{eq:MLE_error_small}.
By the definition of MLE given in \prettyref{eq:def_MLE}, 
\begin{align*}
\left( 1- \eta \right) n \le \overlap(\hat{\Pi}, \id) \le n-2 
& \Rightarrow
\norm{\PPiXperp(y)}^2 \le 
\norm{\PXperp(y)}^2\\
& \qquad \text{for some } \Pi \in \calS(n_1) \text{ and } (1-\eta) n \le n_1 \le n-2.
\end{align*}
Since $\PXperp(X\beta_*) = 0,$ it follows that
\begin{align*}
& \norm{\PPiXperp(y)}^2 \le 
\norm{\PXperp(y)}^2\\
& \Leftrightarrow
\norm{\PPiXperp(X\beta_*)+\PPiXperp(w)}^2
 \le \norm{\PXperp(w)}^2 \\
 & \Leftrightarrow \norm{\PPiXperp(X\beta_*)}^2 + 2 \iprod{\PPiXperp(X\beta_*)}{\PPiXperp(w)} 
    \le \norm{\PXperp(w)}^2 - \norm{\PPiXperp(w)}^2 \\
 & \Leftrightarrow \norm{\PPiXperp(X\beta_*)}^2 + 2 \iprod{\PPiXperp(X\beta_*)}{w} \le \norm{\PPiX(w)}^2 - \norm{\PX(w)}^2.
\end{align*}
By our assumption that $\sigma/\norm{\beta_\ast} \le n^{-2-\epsilon}$, 
on event $\calE_1 \cap \calE_2$, 
for all sufficiently large $n$, all $(1-\eta) n \le n_1 \le n-2$, and all $\Pi \in \calS(n_1),$
$$
\norm{\PXperp(w)}^2 - \norm{\PPiXperp(w)}^2
\le \frac{1}{2} 
\norm{\PPiXperp(X\beta_*)}^2.
$$
Thus, on event $\calE_1 \cap \calE_2$,
\begin{align*}
\left( 1- \eta \right) n \le \overlap(\hat{\Pi}, \id) \le n-2
& \Rightarrow 
\frac{1}{2} \norm{\PPiXperp(X\beta_*)}^2 + 2 \iprod{\PPiXperp(X\beta_*)}{w}  \le 0\\
& \qquad \text{for some } \Pi \in \calS(n_1) \text{ and } (1-\eta) n \le n_1 \le n-2.
\end{align*}
By the Gaussian tail bound, 
\begin{align*}
\prob{
\frac{1}{2} \norm{\PPiXperp(X\beta_*)}^2 + 2 \iprod{\PPiXperp(X\beta_*)}{w}  \le 0
} 
& \le \expect{\exp\left( - \frac{1}{32\sigma^2} \norm{\PPiXperp(X\beta_*)}^2\right)}.
\end{align*}
Therefore, applying union-bound yields that
\begin{align*}
& \prob{\left( 1- \eta \right) n \le \overlap(\hat{\Pi}, \id) \le n-2 , \calE_1, \calE_2} \\
& \le \sum_{n_1 \ge (1-\eta) n}^{n-2} \sum_{\Pi \in \calS(n_1) }  \expect{\exp\left( - \frac{1}{32\sigma^2} \norm{\PPiXperp(X\beta_*)}^2\right)} \\
& \le n^{-\epsilon/8},
\end{align*}
where the last inequality holds by~\prettyref{lmm:MGF_bound_2} and the assumption that $\sigma/\norm{\beta_\ast} \le n^{-2-\epsilon}$. 
Finally, applying~\prettyref{lmm:noise_projection}
and~\prettyref{lmm:signal_projection}, we arrive at
\begin{align*}
\prob{  \left( 1- \eta \right)n \le \overlap(\hat{\Pi}, \id) \le n-2} 
& \le \prob{\left( 1- \eta \right) n \le \overlap(\hat{\Pi}, \id) \le n-2 , \calE_1, \calE_2} \\
& \quad + \prob{\calE_1^c} +\prob{\calE_2^c}\\
& \le 6n^{-\epsilon/8}.
\end{align*}


\section{Conclusions and open problems}
In this paper we resolved the information-theoretically optimal thresholds for exactly and almost exactly recovering the unknown permutation in shuffled linear regression with random design in the sublinear regime of $d=o(n)$.
In addition to determining the sharp threshold in the linear regime of $d=\Theta(n)$ mentioned in \prettyref{sec:results}, a few other problems remain outstanding.

First, the estimator \prettyref{eq:MLE1} attaining the sharp thresholds involves solving the computationally expensive QAP problem. Although for low dimensions this can be approximately computed in $n^{O(d)}$ time, the resulting algorithm is far from practical as it involves searching over an $\delta$-net in $d$ dimensions. For $d\to\infty$, currently there is no polynomial-time algorithms except in the special case of $\sigma=0$ \cite{hsu2017linear,andoni2017correspondence}.

Second, it is of interest to extend the current results to multivariate responses where 
each response $y_i$ is $m$-dimensional for $m>1$. In other words,  
$y = \Pi_* X \beta_* + w$, where 
$\beta_*\in\reals^{d \times m}$.
This has been considered in several existing works such as \cite{pananjady2017linear,zhang2020optimal}, where it is observed that multiple responses can significantly reduce the  required SNR. Drawing from existing results on related models in LAP and QAP \cite{kunisky2022strong,WWXY22}, we conjecture that the optimal thresholds for exact and almost exact recovery are given by
$\SNR = n^{4/m}$ and 
$ n^{2/m}$, respectively, provided that $m$ is not too large.
While one can deduce the lower bound from that in \cite{WWXY22} by considering the oracle setting of a known $\beta_*$, analyzing the counterpart of 
\prettyref{eq:MLE1} remains open.

\section*{Acknowledgment}
 Y.~Wu is supported in part by the NSF Grant CCF-1900507 and an Alfred Sloan fellowship. J.~Xu is supported in part by an NSF CAREER award CCF-2144593.

\bibliographystyle{alpha}
\bibliography{shuffled,IEEEabrv}

\newpage
\appendices

\section{Additional proofs}

\subsection{Moment generating function and cycle decomposition}
\label{app:cycle}

For $k\in[n]$, let $n_k(\Pi)$ denote the number of $k$-cycles in the cycle decomposition of the permutation $\Pi$. 
Let $\fc(\Pi)=\sum_{k=1}^n n_k(\Pi)$ denote the total number of cycles of $\Pi.$

\begin{lemma} \label{lem:mgf}
Fix a permutation $\Pi$ and $\beta \in\reals^d$.
Define 
\begin{align}
    p &\triangleq \frac{1}{2} \left( \sqrt{1 + 2t \lVert \beta_* + \beta \rVert_2^2} + \sqrt{1 + 2t \lVert \beta_* - \beta \rVert_2^2} \right), \label{eq:p} \\
    q &\triangleq \frac{1}{2} \left( \sqrt{1 + 2t \lVert \beta_* + \beta \rVert_2^2} - \sqrt{1 + 2t \lVert \beta_* - \beta \rVert_2^2} \right). \label{eq:q}
\end{align}
Then
\begin{align}
    \E_X \left[ \exp \left( -t \lVert X \beta_* - \Pi X \beta \rVert_2^2 \right) \right]
    &= \prod_{k=1}^{n} (p^k - q^k)^{-n_k(\Pi)} \label{eq:mgf1} \\
    &\leq  (1 + 2t \lVert \beta - \beta_* \rVert_2^2 )^{-\frac{n_1}{2}} ( c_0 \sqrt{ t} \lVert \beta_* \rVert_2)^{-(n-\fc(\Pi))} \label{eq:mgf3}Z\\
    &\leq  (1 + 2t \lVert \beta - \beta_* \rVert_2^2 )^{-\frac{n_1}{2}} ( c_0 \sqrt{t} \lVert \beta_* \rVert_2)^{-\frac{n-n_1(\Pi)}{2}},\label{eq:mgf2} 
\end{align}
where $c_0>0$ is an absolute constant, and the last two inequalities hold under the additional assumption that $\sqrt{t} \|\beta_*\|_2\geq C_0$ for some universal constant $C_0>0$.

\end{lemma}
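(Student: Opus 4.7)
The plan is to exploit the cycle structure of $\Pi$. Writing $Z := X\beta_* - \Pi X\beta$ entrywise as $Z_i = \iprod{X_i}{\beta_*} - \iprod{X_{\pi(i)}}{\beta}$, where $X_i$ denotes the $i$-th row of $X$ viewed as a column vector, I observe that if $(i_1,\ldots,i_k)$ is a cycle of $\Pi$ (with $\pi(i_j) = i_{j+1 \bmod k}$), then $Z_{i_1},\ldots,Z_{i_k}$ depend only on the i.i.d.\ Gaussian rows $X_{i_1},\ldots,X_{i_k}$. Since distinct cycles use disjoint row-index sets, the blocks $Z|_c$ over distinct cycles $c$ are mutually independent. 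Hence $\E[\exp(-t\norm{Z}^2)] = \prod_c \E[\exp(-t\norm{Z|_c}^2)]$, and cycles of equal length contribute identical factors, reducing matters to computing $f_k(t) := \E[\exp(-t\norm{Z|_c}^2)]$ for a single length-$k$ cycle.

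For $k \ge 2$, $Z|_c$ is a centered Gaussian in $\R^k$ with circulant covariance $\Sigma_k = (\norm{\beta_*}^2 + \norm{\beta}^2)I_k - \iprod{\beta_*}{\beta}\,C_k$, where $C_k$ has $1$'s on the two cyclic off-diagonals (the case $k = 1$ can be computed directly, giving $f_1(t) = (1 + 2t\norm{\beta - \beta_*}^2)^{-1/2} = (p - q)^{-1}$). Being circulant, $\Sigma_k$ is diagonalized by the discrete Fourier basis with eigenvalues $\lambda_j = \norm{\beta_*}^2 + \norm{\beta}^2 - 2\iprod{\beta_*}{\beta}\cos(2\pi j/k)$ for $j = 0,\ldots,k-1$, so $f_k(t) = \det(I + 2t\Sigma_k)^{-1/2} = \prod_{j=0}^{k-1}(1 + 2t\lambda_j)^{-1/2}$.

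The substantive step is to convert this product into $(p^k - q^k)^{-1}$. Set $a := \sqrt{1 + 2t\norm{\beta_* + \beta}^2}$ and $b := \sqrt{1 + 2t\norm{\beta_* - \beta}^2}$, so that $p = (a+b)/2$ and $q = (a-b)/2$. From the polar identities $1 - \zeta_j = -2i\sin(\pi j/k)\,e^{i\pi j/k}$ and $1 + \zeta_j = 2\cos(\pi j/k)\,e^{i\pi j/k}$ for $\zeta_j = e^{2\pi i j/k}$, one computes $|p - \zeta_j q|^2 = a^2 \sin^2(\pi j/k) + b^2 \cos^2(\pi j/k)$. On the other hand, expanding $\norm{\beta_* \pm \beta}^2 = \norm{\beta_*}^2 + \norm{\beta}^2 \pm 2\iprod{\beta_*}{\beta}$ and applying the half-angle identities $1 \mp \cos\theta = 2\sin^2(\theta/2),\, 2\cos^2(\theta/2)$ yields the identical expression for $1 + 2t\lambda_j$. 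Since $a, b \ge 1$, we have $p > |q|$ and hence $p^k - q^k > 0$; so the factorization $p^k - q^k = \prod_{j=0}^{k-1}(p - \zeta_j q)$ gives $(p^k - q^k)^2 = \prod_j(1 + 2t\lambda_j)$, i.e., $f_k(t) = (p^k - q^k)^{-1}$, establishing \eqref{eq:mgf1}.

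For the inequalities, a short case analysis on the sign of $q$ and the parity of $k$ yields the uniform bound $p^k - q^k \ge p^{k-1}$ for all $k \ge 1$ (with $p^0 := 1$): each case factors either as $(p \pm q)(p^{k-1} + \cdots) \ge \min(a,b)\, p^{k-1}$ or reduces to $p^k + |q|^k \ge p^k$, with $\min(a,b) \ge 1$ handling the prefactor. Combining this with $p = (a+b)/2 \ge (\sqrt{2t}/2)(\norm{\beta_* + \beta} + \norm{\beta_* - \beta}) \ge \sqrt{2t}\norm{\beta_*}$ via the triangle inequality gives $(p^k - q^k)^{-1} \le (c_0 \sqrt{t}\norm{\beta_*})^{-(k-1)}$ for $k \ge 2$ with $c_0 = \sqrt{2}$, while each $1$-cycle contributes exactly $(1 + 2t\norm{\beta - \beta_*}^2)^{-1/2}$. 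The combinatorial identity $\sum_{k \ge 2}(k-1) n_k(\Pi) = n - \fc(\Pi)$ then yields \eqref{eq:mgf3}, and the further bound $n - \fc(\Pi) \ge (n - n_1(\Pi))/2$ (from $(k-1)/k \ge 1/2$ for $k \ge 2$), together with the standing assumption $c_0\sqrt{t}\norm{\beta_*} \ge 1$, yields \eqref{eq:mgf2}. The main obstacle I anticipate is the trigonometric identification in the third paragraph; once in place, the remaining argument is largely bookkeeping over cycle types.
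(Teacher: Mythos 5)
Your proof is correct and reaches the same product formula $\prod_k (p^k - q^k)^{-n_k(\Pi)}$ as the paper, but by a genuinely different and somewhat cleaner route. The paper decomposes $\beta$ along $\beta_*$, conditions on the component $Z_1 = X\beta_*/\|\beta_*\|$, applies the noncentral $\chi^2$ MGF, and then invokes the MGF of a Gaussian quadratic form to obtain a determinant involving $H^\top H$ and the eigenvalues of $\Pi$. You instead observe directly that $Z = X\beta_* - \Pi X\beta$ is a zero-mean Gaussian vector whose covariance is block-diagonal across cycles of $\Pi$ (because the entries of $Z$ on a given cycle involve only the rows of $X$ indexed by that cycle), and that within a $k$-cycle the covariance is circulant, so its eigenvalues $\lambda_j = \|\beta_*\|^2 + \|\beta\|^2 - 2\iprod{\beta_*}{\beta}\cos(2\pi j/k)$ are immediate. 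This bypasses the noncentral $\chi^2$ step entirely and makes the per-cycle factorization transparent. The remaining trigonometric conversion $\prod_j |p - \zeta_j q|^2 = (p^k - q^k)^2$ is essentially the same calculation in both proofs. For the upper bounds, you use the elementary case analysis $p^k - q^k \geq p^{k-1}$ (valid with no size assumption on $\sqrt{t}\|\beta_*\|$, using only $a,b\geq 1$) followed by the triangle-inequality bound $p \geq \sqrt{2t}\|\beta_*\|$, which is simpler than the paper's Lemma~\ref{lmm:pkqk}, whose three-case proof requires $\sqrt{t}\|\beta_*\| \geq 5\sqrt{2}$ from the outset; your argument needs $\sqrt{t}\|\beta_*\|$ bounded below only in the final step (eq:mgf3) $\Rightarrow$ (eq:mgf2), where it is genuinely required. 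Both approaches ultimately rest on the same algebraic structure, but yours isolates the role of the cycle decomposition more explicitly and with fewer moving parts.
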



\begin{proof}
We first prove the equality \prettyref{eq:mgf1}.
Note that $X \beta_* \sim \NN(0, \norm{\beta_*}^2 I_n)$.
Let us decompose $\beta$ along $\beta_*$: $\frac{\beta}{\norm{\beta}} = \cos \theta \frac{\beta_*}{\norm{\beta_*}} + \sin \theta \beta_*^\perp$, where $\cos \theta = \frac{\langle \beta_*, \beta \rangle}{\norm{\beta_*} \norm{\beta}}$, and $\beta_*^\perp$ is a unit vector orthogonal to $\beta_*$.
Defining $Z_1 \triangleq X \frac{\beta_*}{\norm{\beta_*}}$ and $Z_2 \triangleq X \beta_*^\perp$, we have that $Z_1, Z_2 \overset{\text{iid}}{\sim} \mathcal{N}(0, I_n)$.
So, we can rewrite $X \beta_* = \norm{\beta_*} Z_1$ and $X \beta = \norm{\beta} ( \cos \theta Z_1 + \sin  \theta  Z_2 )$.
Therefore, 
\begin{align*}
\norm{X\beta_* - \Pi X \beta}^2
& =  \norm{ \norm{\beta_*} Z_1 - \norm{\beta} \left( \cos \theta \Pi Z_1
+ \sin \theta \Pi Z_2\right) }^2\\
&= \norm{\beta}^2 \sin^2 \theta \norm{ \Pi Z_2 + \left(\tfrac{\cos \theta}{\sin \theta} \Pi -\tfrac{\norm{\beta_*}}{\norm{\beta} \sin \theta} I \right) Z_1 }^2.
\end{align*}
Let $Z \triangleq \Pi Z_2$, $H \triangleq \frac{\cos \theta}{\sin \theta} \Pi - \frac{\norm{\beta_*}}{\norm{\beta} \sin \theta} I$, $M \triangleq H Z_1$, and $t' = -t \norm{\beta}^2 \sin^2 \theta$. Note that the distribution of $Z$ does not depend on $\Pi$; hence $Z$ is independent of $M$.
Then,
\begin{align*}
    (\dagger) &\triangleq \E_X \left[ \exp \left( -t \lVert X \beta_* - \Pi X \beta \rVert_2^2 \right) \right] 
    = \E_M\left[ \E_{Z \mid M} \left[ \exp\left( t' \norm{ Z + M }^2 \right) \right] \right].
\end{align*}
We recognize the inner expectation as the MGF for a noncentral $\chi^2(n)$ distribution with noncentrality parameter $\lambda = \norm{M}^2$.
Thus,
\begin{align*}
    (\dagger) 
    &= \E_M\left[ ( 1 - 2t' )^{-\frac{n}{2}} \exp\left( \tfrac{t' \norm{M}^2}{1-2t'} \right) \right] 
    = ( 1 - 2t' )^{-\frac{n}{2}} \E_{Z_1} \left[ \exp\left( \tfrac{t'}{1-2t'} \norm{H Z_1}^2 \right) \right].
\end{align*}
Using the expression of MGF for quadratic forms of normal random variables~\cite[Lemma 2]{baldessari1967distribution}, we deduce that
\begin{align*}
    \E_{Z_1} \left[ \exp\left( \tfrac{t'}{1-2t'} \norm{H Z_1}^2 \right) \right]
    &= \left( \det\left( I - \tfrac{2t'}{1-2t'} H^\top H \right) \right)^{-\frac{1}{2}}.
\end{align*}
The eigenvalues of $H$ are $\lambda_j(H) = \tfrac{\cos \theta}{\sin \theta} \lambda_j(\Pi) - \tfrac{\norm{\beta_*}}{\norm{\beta} \sin \theta}$, 
and the eigenvalues of $I - \tfrac{2t'}{1-2t'} H^\top H$ are $ 1 - \tfrac{2t'}{1-2t'} |\lambda_j(H)|^2$.
Therefore, 
$$
\E_{Z_1} \left[ \exp\left( \tfrac{t'}{1-2t'} \norm{H Z_1}^2 \right) \right] = \prod_{j=1}^{n}  \left(1 - \tfrac{2t'}{1-2t'} |\lambda_j(H)|^2 \right)^{-\frac{1}{2}}.
$$
Thus,
\begin{align*}
    (\dagger) 
    &= (1 - 2t')^{-\frac{n}{2}} \prod_{j=1}^{n}  \left(1 - \tfrac{2t'}{1-2t'} |\lambda_j(H)|^2 \right)^{-\frac{1}{2}} \\
    &=  \prod_{j=1}^{n} \left(1 -2t'-2t'|\lambda_j(H)|^2 \right)^{-\frac{1}{2}} \\
    &= \prod_{j=1}^{n}  \Bigg(1 + 2t \norm{\beta}^2 \sin^2\theta + 2t \Big| \norm{\beta} \cos \theta \lambda_j(\Pi) 
    - \norm{\beta_*}\Big|^2 \Bigg)^{-\frac{1}{2}}.
\end{align*}

The eigenvalues of the permutation matrix $\Pi$ are directly determined by its cycle decomposition.
Each $k$-cycle is associated with $k$ eigenvalues of the form $\exp(\tfrac{2\pi i}{k} j_k)$ for $0 \leq j_k \leq k-1$, where $i$ here denotes the imaginary unit. 
Thus, each $\exp(\tfrac{2\pi i}{k} j_k)$ has multiplicity $n_k$, the number of $k$-cycles in $\Pi$.
Using the expansion,
\begin{align*}
    \Big| \norm{\beta} \cos \theta e^{\tfrac{2\pi i}{k}j_k} - \norm{\beta_*} \Big|^2
    = \norm{\beta}^2 \cos^2 \theta + \norm{\beta_*}^2 - 2 \norm{\beta_*} \norm{\beta} \cos \theta \cos \tfrac{2\pi j_k}{k},
\end{align*}
we can write
\begin{align}
    (\dagger) 
    &= \prod_{k=1}^{n}  \big(1 + 2t \norm{\beta}^2 \sin^2(\theta) + 2t ( \norm{\beta}^2 \cos^2 \theta + \norm{\beta_*}^2 - 2 \norm{\beta_*} \norm{\beta} \cos \theta \cos \tfrac{2\pi j_k}{k} \big)^{-\frac{n_k}{2}} \notag \\
    &= \prod_{k=1}^{n} \Bigg[ \prod_{j=0}^{k-1} \big(1 + 2t \norm{\beta}^2 + 2t \norm{\beta_*}^2 - 4t\norm{\beta_*} \norm{\beta} \cos \theta \cos \tfrac{2\pi j}{k} \big) \Bigg]^{-\frac{n_k}{2}}. \label{eq:MGF_explicit}
\end{align}
To simplify this product, let us define a shorthand for the inner product in \prettyref{eq:MGF_explicit}: 
$$
f_k(a,b,\theta) \triangleq \prod_{j=0}^{k-1} (1 + 2t (a^2 + b^2) - 4t a b \cos \theta \cos \tfrac{2\pi j}{k}).
$$
To further simplify $f_k$, let us introduce
\begin{align*}
    p &= \frac{1}{2} \left( \sqrt{ 1 + 2t(a^2+b^2) + 4tab\cos(\theta)} + \sqrt{ 1 + 2t(a^2+b^2) - 4tab\cos(\theta) } \right), \\
    q &= \frac{1}{2} \left( \sqrt{ 1 + 2t(a^2+b^2) + 4tab\cos(\theta)} - \sqrt{ 1 + 2t(a^2+b^2) - 4tab\cos(\theta) } \right).
\end{align*}
Now, we can write $f_k$ as
\begin{align*}
    f_k(a,b,\theta) &= \prod_{j=0}^{k-1} (p^2 + q^2 - 2pq \cos(\tfrac{2\pi j}{k})) 
    = p^{2k} + q^{2k} - 2 p^k q^k
    = (p^k - q^k)^2.
\end{align*}
Plugging in $a = \norm{\beta_*}$ and $b = \norm{\beta}$,
\begin{align*}
    (\dagger) &= \prod_{k=1}^{n} [f_k(a,b,\theta)]^{-\frac{n_k}{2}}
     = \prod_{k=1}^{n}(p^k - q^k)^{-n_k}.
\end{align*}
Recall that $\cos \theta = \langle \beta_*, \beta \rangle/ (\norm{\beta_*} \norm{\beta})$.
Thus, 
\begin{align*}
1 + 2t(a^2+b^2) \pm 4tab\cos(\theta)
&= 1 + 2t\left(\norm{\beta_*}^2+\norm{\beta}^2 \pm  2 \langle \beta_*, \beta \rangle\right)
 = 1 + 2t \norm{\beta_* \pm \beta}^2,
\end{align*}
yielding $p$ and $q$ as stated in \prettyref{eq:p}
and \prettyref{eq:q}.



    Next, we prove the upper bounds \prettyref{eq:mgf3} and \prettyref{eq:mgf2}.
We isolate the special case of $k=1$, noting that $p-q = \sqrt{1 + 2t \lVert \beta - \beta_* \rVert_2^2}$.
For $k \geq 2$, using~\prettyref{lmm:pkqk} below, we have that
$p^k - q^k \ge (c_0 \sqrt{t} \norm{\beta_*})^{k-1}$. 
Therefore,
\begin{align*}
    \prod_{k=1}^{n} (p^k - q^k)^{-n_k(\Pi)}
    &\leq (p - q)^{-n_1(\Pi)} (c_0\sqrt{t} \lVert \beta_* \rVert_2)^{-\sum_{k=2}^{n} n_k(\Pi) (k-1)} \\
    &\leq (p - q)^{-n_1(\Pi)} (c_0 \sqrt{t} \lVert \beta_* \rVert_2)^{-(n-\fc(\Pi))}\\
    &\leq (p - q)^{-n_1(\Pi)} (c_0\sqrt{t} \lVert \beta_* \rVert_2)^{-\frac{n-n_1(\Pi)}{2}},
\end{align*}
where the last inequality applies $\sum_{k=1}^{n} n_k(\Pi) \geq n_1(\Pi)+\frac{n-n_1(\Pi)}{2} =\frac{n+n_1(\Pi)}{2}$.
\end{proof}

\begin{lemma}
    \label{lmm:pkqk}
Let $p$ and $q$ be defined in \prettyref{eq:p}--\prettyref{eq:q}. 
There exist absolute constants $c_0$, such that if $\sqrt{t} \|\beta_*\|_2\geq 5 \sqrt{2}$, then for all $k\geq 2$,
\[
p^k-q^k \geq (c_0\sqrt{t} \|\beta_*\|_2)^{k-1}.
\]
\end{lemma}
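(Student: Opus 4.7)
The plan is to work in the notation $A=\sqrt{1+2t\|\beta_*+\beta\|^2}$ and $B=\sqrt{1+2t\|\beta_*-\beta\|^2}$, so that $p=(A+B)/2$ and $q=(A-B)/2$. The bounds $A,B\geq 1$ immediately yield $p\geq|q|$, $p-q=B\geq 1$, $p+q=A\geq 1$, and $p-|q|=\min(A,B)\geq 1$, while squaring and summing gives the identity $p^2+q^2=(A^2+B^2)/2=1+2t(\|\beta_*\|^2+\|\beta\|^2)$.

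First I would extract a lower bound on $p$ alone: from $p\geq|q|$ one gets $2p^2\geq p^2+q^2\geq 2t\|\beta_*\|^2$, so $p\geq\sqrt{t}\,\|\beta_*\|$. Under the hypothesis $\sqrt{t}\,\|\beta_*\|\geq 5\sqrt{2}$ this also gives $p\geq 5\sqrt{2}>1$, which is used in the odd-$k$ branch below. Next I would lower-bound $p^k-q^k\geq p^{k-1}$ by case analysis on the sign of $q$, using the elementary factorization $a^k-b^k=(a-b)\sum_{j=0}^{k-1}a^{k-1-j}b^j$. When $q\geq 0$, $p\geq q\geq 0$ and the sum is at least its $j=0$ term $p^{k-1}$; combined with the gap $p-q=B\geq 1$, this gives $p^k-q^k\geq p^{k-1}$. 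When $q<0$ and $k$ is even, $q^k=|q|^k$ and the same factorization applied to $p\geq|q|\geq 0$ yields $p^k-q^k\geq(p-|q|)\,p^{k-1}\geq p^{k-1}$. When $q<0$ and $k$ is odd, $p^k-q^k=p^k+|q|^k\geq p^k\geq p^{k-1}$ using $p\geq 1$. Combining, $p^k-q^k\geq p^{k-1}\geq (\sqrt{t}\,\|\beta_*\|)^{k-1}$, so the statement holds with $c_0=1$.

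The only genuine subtlety is the case split: when $\langle\beta_*,\beta\rangle<0$ the quantity $q$ is negative and $p^k-q^k$ alternates in ``dominant term'' structure with the parity of $k$, which is why the odd-$k$ case must be handled separately from the even one. The role of the ``$+1$'' inside the square roots defining $p$ and $q$ (equivalently, $A,B\geq 1$) is precisely to guarantee that the three ``gap'' factors $B$, $\min(A,B)$, and $p$ are each at least $1$, so that all three sub-bounds come out uniformly as $p^{k-1}$. The hypothesis $\sqrt{t}\,\|\beta_*\|\geq 5\sqrt{2}$ itself is not essential for the inequality above (any $c_0\leq 1$ works), but it is included to keep $(c_0\sqrt{t}\,\|\beta_*\|)^{k-1}$ comfortably bounded away from $1$ as needed in the downstream use of \prettyref{lem:mgf}.
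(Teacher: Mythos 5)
Your proof is correct and takes a genuinely different (and arguably cleaner) route than the paper's. The paper rescales to $u_*=\sqrt{2t}\beta_*$, $u=\sqrt{2t}\beta$, rotates so that $u_*$ is a multiple of $e_1$, and then first shows by differentiating in $b=\|u_{\setminus 1}\|^2$ that $p^k-q^k$ is minimized at $b=0$, and afterwards splits into three cases on $a=u_1$ (using convexity of $x\mapsto\sqrt{1+x^2}$ for $a\le 0$, a ratio bound $p/q\geq 5/4$ for $0\le a\le s/2$, and convexity of $x^k$ plus a lower bound on $q$ for $a\geq s/2$), ending with the constant $c_0=\sqrt2/3$. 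You instead never leave the $(A,B)$-coordinates: you observe that $A,B\ge 1$ gives you three uniform ``gap'' factors $p-q=B$, $p+q=A$, and $p-|q|=\min(A,B)$ all at least $1$, extract $p\ge\sqrt{t}\,\|\beta_*\|$ directly from the parallelogram identity $p^2+q^2=1+2t(\|\beta_*\|^2+\|\beta\|^2)$, and then close with the elementary factorization $a^k-b^k=(a-b)\sum_{j=0}^{k-1}a^{k-1-j}b^j$, case-splitting only on the sign of $q$ and the parity of $k$. This avoids both the rotation/rescaling and the calculus step and yields the sharper constant $c_0=1$. One small clarification: you note that the hypothesis $\sqrt{t}\,\|\beta_*\|\geq 5\sqrt2$ is used to ensure $p\geq 1$ in the odd-$k$ branch, but in fact $p=(A+B)/2\geq 1$ unconditionally since $A,B\geq 1$, so your argument needs no lower bound on $\sqrt{t}\,\|\beta_*\|$ at all (you do acknowledge this indirectly afterwards). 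Your stated speculation about why the paper imposes that hypothesis is also slightly off: it is used inside the paper's Case~III computations (and to cleanly absorb constants into $c_0$), not to keep the final bound away from $1$; but this is a comment about the paper's proof, not a gap in yours.
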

\begin{proof}
Rescaling for convenience, let $u_*\equiv\sqrt{2t} \beta_*$ and 
$u\equiv\sqrt{2t} \beta$.
    Note that $p,q$ only depends on $u \pm u_*$.
    Without loss of generality, by rotation we can assume that $u_*= \norm{u_*} e_1$, where $e_1=(1,0,\ldots,0)$.
    Let $u_1=a\in\reals$ and $b=\norm{u_{\backslash 1}}^2$.
    Assume that $s\equiv \norm{u_*} = \sqrt{2t} \|\beta_*\|_2 \geq 10$.
    Then we have
$$
p=\frac{1}{2}(\sqrt{1+(s+a)^2+b}+\sqrt{1+(s-a)^2+b}) \text{ and } q=\frac{1}{2}(\sqrt{1+(s+a)^2+b}-\sqrt{1+(s-a)^2+b}).
$$

We first show that for any fixed $a$, 
$p^k-q^k$ is minimized at $b=0$. 
This follows from the identity that 
\[
\frac{d}{db}(p^k-q^k)
= \frac{k(p^k+q^k)}{2 \sqrt{1+(s+a)^2+b} \sqrt{1+(s-a)^2+b}} \geq 0.
\]
Next, setting $b=0$, let 
\begin{align*}
    g(a) &  \triangleq 
\pth{\frac{1}{2}(\sqrt{1+(s+a)^2}+\sqrt{1+(s-a)^2})}^k - 
\pth{\frac{1}{2}(\sqrt{1+(s+a)^2}-\sqrt{1+(s-a)^2})}^k.
\end{align*}
To lower bound $g(a)$, consider three cases:

\textit{Case I: $a\leq 0$.}
We only need to consider this case for odd $k$ (for otherwise  $g(a)$ is an even function in $a$). Then the second term in $g(a)$ is nonpositive.
Furthermore, by convexity of $x \mapsto \sqrt{1+x^2}$,
$$
p = \frac{1}{2}(\sqrt{1+(s+a)^2}+\sqrt{1+(s-a)^2})
\geq \sqrt{1+( \tfrac{s+a}{2} + \tfrac{s-a}{2} )^2} = \sqrt{1 + s^2}.
$$
So 
$g(a)
\geq (\sqrt{1+s^2})^k \geq s^k$.

\textit{Case II:} $0 \leq a\leq s/2$. In this case, $0 \le q \le p$. Furthermore,
\begin{align*}
  \frac{p}{q}  &=
  \frac{
  \sqrt{1+(s+a)^2}+\sqrt{1+(s-a)^2}
  }{
  \sqrt{1+(s+a)^2}-\sqrt{1+(s-a)^2}
  }
  = 
  \frac{ (\sqrt{1+(s+a)^2}+\sqrt{1+(s-a)^2})^2}{
  4as
  }
  \ge \frac{a^2+s^2}{2as}
  \ge \frac{5}{4},
\end{align*}
where the last inequality holds because 
$(a^2+s^2)/(2as)$ is monotone decreasing in $a$ for $0 \le a \le s/2.$
Therefore, 
$$
g(a) \ge p^k (1- (4/5)^k) \ge p^k /5 \ge (s/2)^k/5,
$$
where the last inequality holds because $p \ge (s+a)/2 \ge s/2.$

\textit{Case III:}
$a \geq s/2$.
In this case. $0 \le q \le p$ and $p-q\ge 1$ by definition. Thus, by the convexity of $x^k$, 
$p^k-q^k \ge kq^{k-1}(p-q) \ge kq^{k-1}. $
Furthermore, 
\begin{align*}
  q  = & \frac{(s+a)^2-(s-a)^2}{2(\sqrt{1+(s+a)^2}+\sqrt{1+(s-a)^2})}
\geq  \frac{sa}{\sqrt{1+(s+a)^2} } 
 \geq \frac{sa}{s+a} \geq s/3,
\end{align*}
Therefore, we have $g(a) \ge k (s/3)^{k-1}.$

Assembling all three cases, we conclude that 
$$
g(a) \ge \min \left\{ (s/2)^k/5, k (s/3)^{k-1} \right\}
\ge (s/3)^{k-1},
$$
where the last inequality holds under the assumption that $s\ge 10$. 
%
%
\end{proof}

\subsection{Proof of \prettyref{lem:discretization}}
The fact that $|N_\delta| \leq (1+\frac{2r}{\delta})^d$ follows from the standard volume bound (see, e.g., \cite[Corollary 4.2.13]{Vershynin_2018}).

Now, we prove the approximation inequality.
Let $\beta' \in N_{\delta}$ satisfy $\lVert \hat{\beta}_{\Pi} - \beta' \rVert_2 \leq \delta$.
Since $\lVert \hat{\beta}_{\Pi} \rVert_2 < r$, the minimizer for $\beta \in B_r(\beta_*)$ is equal to $\hat{\beta}_{\Pi}$, the minimizer over $\R^d$.
Thus, by the orthogonal projection theorem, $y - \Pi X \hat{\beta}_{\Pi}$ is orthogonal to the column space of $\Pi X$, so $\langle y - \Pi X \hat{\beta}_{\Pi}, \Pi X (\beta' - \hat{\beta}_{\Pi}) \rangle = 0$.
Therefore,
\begin{align*}
    \norm{ y - \Pi X \beta' }^2
    &= \lVert y - \Pi X \hat{\beta}_{\Pi} \rVert^2 + \lVert \Pi X (\beta' - \hat{\beta}_{\Pi}) \rVert_2^2 - 2 \langle y - \Pi X \hat{\beta}_{\Pi}, \Pi X (\beta' - \hat{\beta}_{\Pi}) \rangle \\
    &\leq \lVert y - \Pi X \hat{\beta}_{\Pi} \rVert_2^2 + \opnorm{ \Pi X } \lVert \beta' - \hat{\beta}_{\Pi} \rVert_2^2 \\
    &= \lVert y - \Pi X \hat{\beta}_{\Pi} \rVert_2^2 + \opnorm{ X } \delta^2.
\end{align*}
Taking the minimum w.r.t. $\beta'$ yields the desired result.


\subsection{Proof of \prettyref{lem:events}}

Let $\sigma_{\max}(X)$ and $\sigma_{\min}(X)$ denote the maximum and minimum singular values of $X$, which is $n\times d$ and independent $\calN(0,1)$ entrywise. By \cite[Theorem II.7]{MR1863696}, for $n>d$, we have
\begin{align}
    \prob{\sigma_{\max}(X) > \sqrt{n}+\sqrt{d}+t} & \leq e^{-t^2/2} \label{eq:sval_max} \\ 
    \prob{\sigma_{\min}(X) < \sqrt{n}-\sqrt{d}-t} & \leq e^{-t^2/2}. \label{eq:sval_min}
\end{align}
The following tail bounds for the chi-squared distribution will also be useful. Let $Z_m \sim \chi^2(m)$. Then,
\begin{align}
\prob{Z_m \ge m + 2\sqrt{ m t} + 2t} & \le \exp(-t), \label{eq:chi_lower} \\
\prob{Z_m \le m - 2\sqrt{ m t}} & \le \exp(-t). \label{eq:chi_upper}
\end{align}

We consider $\EE_1$ first. 
Let $\sigma_0^2\triangleq n^{-2-\epsilon}\norm{\beta_\ast}^2$. By union bound,  
\begin{align*}
\prob{\calE_1^c}
& \le |N_\delta(r)|
\sum_{n_1=0}^{(1-\eta)n}
\sum_{\Pi \in \calS(n_1)}
\prob{ 
\norm{X\beta_\ast - \Pi X \beta }^2 \le \sigma^2_0 (n-n_1)}.
\end{align*}
For a fixed $\Pi$ and $\beta$, applying the Chernoff bound, we deduce that
\begin{align*}
\prob{ 
\norm{X\beta_\ast - \Pi X \beta }^2 \le \sigma^2_0 (n-n_1)} 
& \le e^{(n-n_1)/32}
\expect{e^{-
\frac{1}{32\sigma_0^2}\norm{X\beta_\ast - \Pi X \beta }^2 } }.
\end{align*}
By~\prettyref{lem:discretization} and the assumption that $r/\delta \le n^2$, $|N_\delta(r)| \le (1+2r/\delta) \le (1+2n^2)^d$. It follows that
\begin{align*}
\prob{\calE_1^c}
& \le |N_\delta(r)|
\sum_{n_1=0}^{(1-\eta)n}
e^{(n-n_1)/32}
\sum_{\Pi \in \calS(n_1)}
\expect{e^{-
\frac{1}{32\sigma_0^2}\norm{X\beta_\ast - \Pi X \beta }^2 } }\\
&  \le (1+2 n^{2})^d n^{-\epsilon \eta n/10}
\le n^{-\epsilon \eta n/20},
\end{align*}
where we applied~\prettyref{lmm:MGF_bound_1} and used the assumptions that $\eta \epsilon n \ge 100d$ and $\eta \ge n^{-\epsilon/10}.$

Next, we consider $\EE_2$.  Applying \prettyref{eq:sval_max} with $t = \sqrt{n} $,
we get that
$$
\prob{\opnorm{X} \le 3\sqrt{n}} \le e^{-n/2}.
$$


Finally, we show $\EE_3$ holds with high probability.
By the definition of $\hat{\beta}_\Pi$ given in~\prettyref{eq:beta_Pi}, 
\begin{align*}
\norm{\hat{\beta}_\Pi}
&\le \norm{(X^\top X)^{-1} X^\top \Pi^\top y} 
\le \opnorm{(X^\top X)^{-1} X^\top} \norm{y}. 
\end{align*}
Since $\lVert (X^\top X)^{-1} X^\top \rVert_{\rm op} = (\sigma_\text{min}(X))^{-1}$, it follows from \prettyref{eq:sval_min} that 
with probability at least $1-\exp(-n/32),$
$$
\lVert (X^\top X)^{-1} X^\top \rVert_{\rm op} \le  
\frac{1}{\sqrt{n}-\sqrt{d} - \sqrt{n}/4} \le \frac{4}{\sqrt{n}},
$$
where the last inequality holds due to the assumption that $d \le n/4$.
Moreover, since $y=X\beta + w$, it follows that $\norm{y}^2 \sim (\norm{\beta_*}^2+\sigma^2) \chi^2 (n).$ Therefore, by~\prettyref{eq:chi_lower}, with probability at least $1-\exp(-n)$,
$\norm{y}^2 \le 5(\norm{\beta_*}^2+\sigma^2)n$. Hence,
 we get that 
 $$
\norm{\hat{\beta}_\Pi} \le 4 \sqrt{5(\norm{\beta_*}^2+\sigma^2)n}
\le 4\sqrt{10} \norm{\beta_*},
 $$
 where the last inequality holds under the assumption that $\SNR=\norm{\beta_*}^2/\sigma^2 \ge 1$. Finally, we get that
 $$
 \norm{\hat{\beta}_{\Pi} - \beta_*}
 \le \norm{\hat{\beta}_{\Pi} } +\norm{\beta_*}
 \le \left( 4\sqrt{10}+1 \right) \norm{\beta_*}.
 $$

\subsection{Proof of~\prettyref{lmm:MGF_bound_1}}
\label{app:MGF_bound_1}

By Lemma~\ref{lem:mgf}, we have 
\begin{align*}
\expect{\exp\left( - \frac{1}{32\sigma^2} \norm{X\beta_*-\Pi X\beta }^2 \right) }
& \le \left( c_0 \norm{\beta_\ast} /\sigma \right)^{-(n-\fc(\Pi))}  \le \left( c_0 \norm{\beta_\ast} /\sigma \right)^{-(n-n_1-\fc(\tilde{\Pi}))},
\end{align*}
where $c_0$ is an absolute constant, $\fc(\Pi)$ is the total number of cycles of $\Pi$ and $\tilde{\Pi}$ is the restriction of permutation $\Pi$ on its non-fixed points, which, by definition, is a derangement. 
Recall $\SNR= \norm{\beta_\ast}^2/\sigma^2$.
It follows that 
\begin{align*}
\sum_{\Pi \in \calS(n_1)} \expect{\exp\left( - \frac{1}{32\sigma^2} \norm{X\beta_*-\Pi X\beta }^2 \right) }
& \le \left( c_0^2\SNR \right)^{-\frac{n-n_1}{2}}
\sum_{\Pi \in \calS(n_1)} \left( c_0 \sqrt{
\SNR}\right)^{\fc(\tilde{\Pi})} \\
& \le  \left( c_0^2 \SNR \right)^{-\frac{n-n_1}{2}}
\binom{n}{n_1} (n-n_1)!
\left( \frac{16 c_0 \sqrt{\SNR}}{n-n_1}\right)^{\frac{n-n_1}{2}},
\end{align*}
where the last inequality follows from~\cite[eq.~(49)]{WWXY22} applying the moment generating of the number of cycles in a random derangement.
Therefore,
\begin{align*}
& \sum_{n_1=0}^{(1-\eta)n} C^{n-n_1} \sum_{\Pi \in \calS(n_1)} 
\expect{\exp\left( - \frac{1}{32\sigma^2} \norm{X\beta_*-\Pi X\beta }^2 \right) } \\
& \le \sum_{n_1=0}^{(1-\eta)n} \binom{n}{n_1} (n-n_1)!  \left( \SNR \right)^{-\frac{n-n_1}{2}}
\left( \frac{C' \sqrt{\SNR}}{n-n_1}\right)^{\frac{n-n_1}{2}} \\
& \le \sum_{n_1=0}^{(1-\eta)n} \left( 
\frac{C' n }{\eta \sqrt{\SNR}}\right)^{\frac{n-n_1}{2}}  \le n^{-\epsilon \eta n /10},
\end{align*}
where $C'$ is an absolute constant; the last inequality holds for all sufficiently large $n$ and follows from the assumptions that $\SNR \ge n^{2+\epsilon}$
and $\eta \ge n^{-\epsilon/10}$.


\subsection{Proof of~\prettyref{lmm:noise_projection}}
While $\norm{\PPiX(w)}^2 \sim \sigma^2 \chi^2(d)$ and $\norm{\PX(w)}^2 \sim \sigma^2 \chi^2(d)$, it turns out that
\begin{align}
\norm{\PPiX(w)}^2 - \norm{\PX(w)}^2 = \sigma^2 \left( Z_m - \tilde{Z}_m \right), \label{eq:chi_square_rep}
\end{align}
where $m \le \min\{d, n-n_1\}$, and  $Z_m$ and  $\tilde{Z}_m$ are two (possibly correlated) chi-squared random variables with $m$ degree of freedom (see~\cite[Proof of Claim (23a)]{pananjady2017linear} for a proof).\footnote{While both our proof and the proof of Claim (23a) in~\cite{pananjady2017linear} rely on~\prettyref{eq:chi_square_rep}, our use of tail bounds  on $Z_m$ and $\tilde{Z}_m$ differs from those in~\cite{pananjady2017linear}. This is because the proof in~\cite{pananjady2017linear} uses subexponential tail bounds on $Z_m$ and $\tilde{Z}_m$, which are only valid when $m$ is a constant. } 
Using the tail bounds of chi-squared distribution in \prettyref{eq:chi_lower} and \prettyref{eq:chi_upper} and choosing $t=2(n-n_1) \log n$, it follows that with probability at most $2n^{-2(n-n_1)}$, 
\begin{align*}
\norm{\PPiX(w)}^2 - \norm{\PX(w)}^2 
& \le 4 \sigma^2 \sqrt{2m(n-n_1) \log n } + 4 \sigma^2 (n-n_1) \log n \\
& \le 10 \sigma^2 (n-n_1) \log n.
\end{align*}
The proof is complete by applying a union bound over $\calS(\pi)$ and $0 \le n_1 \le n-2$ and invoking $|\calS(\pi)| \le n^{n-n_1}.$

\subsection{Proof of~\prettyref{lmm:signal_projection}}
Let $\sigma_0^2/\norm{\beta_\ast}=n^{-4-\epsilon}$.
The proof follows from combining the Chernoff bound with~\prettyref{lmm:MGF_bound_2}. 
In particular, by the Chernoff bound, for any $t\ge0,$
\begin{align*}
\prob{\norm{\PPiXperp(X\beta_*)}^2 \le \sigma_0^2 (n-n_1) } 
& \le e^{t \sigma_0^2(n-n_1)}
\expect{e^{-t\norm{\PPiXperp(X\beta_*)}^2}}
\end{align*}
Choosing $t=\frac{1}{32\sigma_0^2}$ yields that
\begin{align*}
\prob{\norm{\PPiXperp(X\beta_*)}^2 \le \sigma_0^2 (n-n_1) } 
& \le \exp\left( \frac{n-n_1}{32} \right)
\expect{\exp \left(-\frac{1}{32\sigma_0^2}\norm{\PPiXperp(X\beta_*)}^2\right)}.
\end{align*}
By a union bound,
\begin{align*}
\prob{\calE_2^c} 
& \le \sum_{n_1 \ge (1-\eta)n}^{n-2}
e^{ (n-n_1)/32}
\sum_{\pi\in \calS(n_1)} 
\expect{e^{-\frac{1}{32\sigma_0^2}\norm{\PPiXperp(X\beta_*)}^2}}
\le n^{-\epsilon/8 },
\end{align*}
where the last inequality holds by~\prettyref{lmm:MGF_bound_2}.
\subsection{Proof of~\prettyref{lmm:MGF_bound_2}}
Our proof utilizes the following crucial fact shown in~\cite[Proof of Lemma 5]{pananjady2017linear}:
$$
\norm{\PPiXperp(X \beta_*)} \overset{(d)}{=} \norm{\beta_*}
\xi \norm{\calP_{(\Pi x_1)^\perp}(x_1)},
$$
where the equality holds in distribution, $\xi$ denotes the length of a unit vector in $(n-1)$-dim space projected onto a $(n-d)$-dim subspace chosen uniformly at random, $x_1$ is the first column of $X$, and $\xi$ is independent of $x_1$ and $\Pi.$
Therefore, 
\begin{align*}
\expect{\exp\left( - \frac{1}{32\sigma^2} \norm{\PPiXperp(X\beta_*)}^2\right)} 
& = \expect{\exp\left( - \frac{\norm{\beta_*}^2\xi^2}{32 \sigma^2} \norm{\calP_{(\Pi x_1)^\perp}(x_1)}^2 \right)} \\
& \le \expect{\exp\left( - \frac{\norm{\beta_*}^2 \tau^2}{32\sigma^2} \norm{\calP_{(\Pi x_1)^\perp}(x_1)}^2 \right)} 
+ \prob{\xi^2 \le \tau^2},
\end{align*}
where $\tau>0$ can be any threshold. 

We use the following concentration inequality for $\xi$~\cite[Lemma 2.2]{dasgupta2003elementary}: for all $t <1$, 
$$
\prob{\xi^2 \le t \frac{n-d}{n-1}}
\le t^{(n-d)/2} \left( 
1+ \frac{(1-t)(n-d)}{(d-1)} \right)^{(d-1)/2}
$$
Setting $t=n^{-\epsilon/2}$, we deduce that
\begin{align*}
\prob{\xi^2 \le n^{-\epsilon/2} \frac{n-d}{n-1}}
& \le n^{-\epsilon(n-d)/4} \left( 
\frac{n-1}{d-1} \right)^{(d-1)/2} 
\le n^{-\epsilon n/5},
\end{align*}
where the last inequality holds for all sufficiently large $n$ due to the assumption that $d=o(n).$
In this way, we obtain a tail probability $n^{-\epsilon n/5}$, so that we can still get a vanishing probability after taking a union bound over $\Pi \in \calS(n_1)$ for $n_1 \ge (1-\epsilon/8)n.$ Thus, we choose $\tau^2=n^{-\epsilon/2} \frac{n-d}{n-1}$. 

Note that a similar concentration bound of $\xi^2$ was used in~\cite[Proof of Lemma 5]{pananjady2017linear}. However, since the argument in~\cite{pananjady2017linear} directly applies a union bound over all $\Pi \neq \Pi_*$, $\tau^2$ was chosen to be a much smaller value $n^{-2n/(n-d)}$. As we will see, this will lead to a much more stringent condition on $\sigma$.

Next, observe that 
\begin{align*}
\norm{\calP_{(\Pi x_1)^\perp}(x_1)}^2 
& =\norm{x_1}^2 - \iprod{x_1}{\Pi x_1}^2\norm{x_1}^{-2} \\
&\ge \norm{x_1}^2- \left|\iprod{x_1}{\Pi x_1}\right| \\
&=\min\left\{  \norm{x_1 - \Pi x_1}^2, \norm{x_1+\Pi x_1}^2 \right\}.
\end{align*}
Therefore, 
\begin{align*}
\expect{\exp\left( - \frac{\norm{\beta_*}^2\tau^2}{32\sigma^2} \norm{\calP_{(\Pi x_1)^\perp}(x_1)}^2 \right)}  
& \le  \expect{\exp\left( - \frac{\norm{\beta_*}^2\tau^2}{32\sigma^2} \min\left\{  \norm{x_1 - \Pi x_1}^2, \norm{x_1+\Pi x_1}^2 \right\} \right)} \\
& \le \sum_{s = \pm 1} \expect{\exp\left( - \frac{\norm{\beta_*}^2\tau^2}{32\sigma^2}   \norm{x_1 - s \Pi x_1}^2 \right)}
\\
& \le 2 \left(\frac{C_0\sigma}{ \norm{\beta_*}\tau}\right)^{(n-n_1)/2},
\end{align*}
where $C_0$ is some absolute constant, and the last inequality follows from our MGF bound~\prettyref{eq:mgf2}.
In conclusion, we get that
\begin{align*}
\expect{\exp\left( - \frac{1}{32\sigma^2} \norm{\PPiXperp(X\beta_*)}^2\right)} 
& \le 2 \left(\frac{C_0\sigma}{ \norm{\beta_*}\tau}\right)^{(n-n_1)/2}
+ n^{-\epsilon n/5}.
\end{align*}
Therefore, 
\begin{align*}
& \sum_{n_1 \ge (1-\eta) n}^{n-2} C^{n-n_1} \sum_{\Pi \in \calS(n_1) }  \expect{\exp\left( - \frac{1}{32\sigma^2} \norm{\PPiXperp(X\beta_*)}^2\right)}\\
& \le
\sum_{n_1 \ge (1-\eta) n}^{n-2}
(Cn)^{n-n_1}
\left(2\left(\frac{C_0\sigma}{ \norm{\beta_*}\tau}\right)^{(n-n_1)/2}
+ n^{-\epsilon n/5}  \right) \\
& \le 
\sum_{n_1 \ge (1-\eta) n}^{n-2}
\left(2 \left(\frac{C^2 n^2 C_0\sigma}{ \norm{\beta_*}\tau}\right)^{(n-n_1)/2}
+ (Cn)^{\eta n}  n^{-\epsilon n/5}   \right) \\
& \le n^{-\epsilon/8 },
\end{align*}
where the last inequality holds for all sufficiently large $n$ by the assumptions that $\sigma/\norm{\beta_*} \le n^{-2-\epsilon/2}$, $\eta \le \epsilon/8$, and $d=o(n).$

\end{document}